% binomialDecomp.tex 
%%%%%%%%%%%%%%%%%%%%%%%%%%%%%%%%%%%%%%%%%%%%%%%%%%%%%%%%%%%%%%%%%%%%%%%%%%%%%
%%%%%%%                                                               %%%%%%%
%%%%%%%       Combinatorics of binomial primary decomposition         %%%%%%%
%%%%%%%                                                               %%%%%%%
%%%%%%%                                                               %%%%%%%
%%%%%%% Alicia Dickenstein, Laura Felicia Matusevich, and Ezra Miller %%%%%%%
%%%%%%%                                                               %%%%%%%
%%%%%%%%%%%%%%%%%%%%%%%%%%%%%%%%%%%%%%%%%%%%%%%%%%%%%%%%%%%%%%%%%%%%%%%%%%%%%

\documentclass[12pt]{amsart}

\voffset=-1.4mm
\oddsidemargin=17pt \evensidemargin=17pt
\headheight=9pt     \topmargin=26pt
\textheight=624pt   \textwidth=440.8pt
\parskip=1ex
\parindent0pt

\usepackage{times}
\usepackage{amssymb}
\usepackage{amsfonts}%This is to have nice letters
\usepackage{amsthm}
\usepackage{graphicx}
\usepackage{psfrag}
%usepackage{showkeys}

\newcommand{\excise}[1]{}%{$\star$\textsc{#1}$\star$}

%\numberwithin{section}{part}
%\renewcommand\thepart{\Roman{part}}
% Theorem environments with italic font
\newtheorem{theorem}{Theorem}[section]
\newtheorem{lemma}[theorem]{Lemma}

\newtheorem{cor}[theorem]{Corollary}
\newtheorem{prop}[theorem]{Proposition}

\theoremstyle{definition}
\newtheorem{example}[theorem]{Example}
\newtheorem{remark}[theorem]{Remark}
\newtheorem{defn}[theorem]{Definition}
\newtheorem{convention}[theorem]{Convention}

\newtheorem{notation}[theorem]{Notation}

\numberwithin{equation}{section}

%For numbered lists with arabic 1. 2. 3. numbering
\newenvironment{numbered}%
        {\begin{list}
                {\noindent\makebox[0mm][r]{\arabic{enumi}.}}
                {\leftmargin=5.5ex \usecounter{enumi}}
        }
        {\end{list}}

%For numbered lists within the main body of the text
%
        {\begin{list}
                {\noindent\makebox[0mm][r]{(\roman{enumi})}}
                {\leftmargin=5.5ex \usecounter{enumi}}
        }
        {\end{list}}

%For separated lists with consecutive numbering
\newcounter{separated}

\newcommand{\ring}[1]{\ensuremath{\mathbb{#1}}}

%single characters, used in math mode
\newcommand\<{\langle}
\renewcommand\>{\rangle}
\newcommand\CC{\ring{C}}
\newcommand\KK{{\mathcal K}}

\newcommand\NN{\ring{N}}
\newcommand\QQ{\ring{Q}}
\newcommand\RR{\ring{R}}
\newcommand\TT{{\mathcal T}}
\newcommand\ZZ{\ring{Z}}
\newcommand\kk{\Bbbk}
\newcommand\mm{{\mathfrak m}}
\newcommand\pp{{\mathfrak p}}
\newcommand\cA{{\mathcal A}}
\newcommand\cB{{\mathcal B}}
\newcommand\cC{{\mathcal C}}
\newcommand\oI{{\hspace{.25ex}\overline{\hspace{-.25ex}I}}}
\newcommand\oJ{{\hspace{.45ex}\overline{\hspace{-.45ex}J}}}

\newcommand\oU{\overline{U}}
\newcommand\wU{\widetilde{U}}
\newcommand\del{t}
\newcommand\ttt{\mathbf{t}}
\newcommand\ddel{\partial}

%roman font words for math mode
\renewcommand\th{{\rm th}}
\newcommand\sat{{\rm sat}}

\newcommand\rank{{\rm rank}}

%math symbols without arguments

\newcommand\minus{\smallsetminus}

%math symbols taking arguments
\def\ol#1{{\overline {#1}}}

% Replaces \atop
\newcommand{\aoverb}[2]{{\genfrac{}{}{0pt}{1}{#1}{#2}}}
%0 = displaystyle               in the 4th argument
%1 = textstyle
%2 = scriptstyle
%3 = scriptscriptstyle
\def\twoline#1#2{\aoverb{\scriptstyle {#1}}{\scriptstyle {#2}}}

%%%%%%%%%%%%%%%%%%%%%%%%%%%%%%%%%%%%%%%%%%%%%%%%%%%%%%%%%%%%%%%%%%%%%%%%
\begin{document}%%%%%%%%%%%%%%%%%%%%%%%%%%%%%%%%%%%%%%%%%%%%%%%%%%%%%%%%
%%%%%%%%%%%%%%%%%%%%%%%%%%%%%%%%%%%%%%%%%%%%%%%%%%%%%%%%%%%%%%%%%%%%%%%%

\mbox{}
%\vspace{-6ex}%-1.1743pt}
\title{Combinatorics of binomial primary decomposition}

\author{Alicia Dickenstein}
\address[AD]{Departamento de Matem\'atica\\
FCEN, Universidad de Buenos Aires \\(1428) Buenos Aires, Argentina.}
\email{alidick@dm.uba.ar}
\thanks{AD was 
partially supported by UBACYT X042, CONICET PIP 5617 and ANPCyT PICT
20569, Argentina}

\author{Laura Felicia Matusevich}
\address[LFM]{Department of Mathematics \\
Texas A\&M University \\ College Station, TX 77843.}
\email{laura@math.tamu.edu}
\thanks{LFM was partially supported by an NSF Postdoctoral Research
Fellowship and NSF grant DMS-0703866}

\author{Ezra Miller}
\address[EM]{Department of Mathematics \\ University of Minnesota \\
Minneapolis, MN 55455.}
\email{ezra@math.umn.edu}
\thanks{EM was partially supported by NSF grants DMS-0304789 and DMS-0449102}

\subjclass[2000]{Primary: 13F99, 52B20, Secondary: 20M25, 14M25}
\date{25 March 2008}

\begin{abstract}
An explicit lattice point realization is provided for the primary
components of an arbitrary binomial ideal in characteristic zero.
This decomposition is derived from a char\-acter\-istic-free
combinatorial description of certain primary components of binomial
ideals in affine semigroup rings, namely those that are associated to
faces of the semigroup.  These results are intimately connected to
hypergeometric differential equations in several variables.
\end{abstract}
\maketitle

%%%%%%%%%%%%%%%%%%%%%%%%%%%%%%%%%%%%%%%%%%%%%%%%%%%%%%%%%%%%%%%%%%%%%%%%%
\section{Introduction}%%%%%%%%%%%%%%%%%%%%%%%%%%%%%%%%%%%%%%%%%%%%%%%%%%%
%%%%%%%%%%%%%%%%%%%%%%%%%%%%%%%%%%%%%%%%%%%%%%%%%%%%%%%%%%%%%%%%%%%%%%%%%

A binomial is a polynomial with at most two terms; a binomial ideal is
an ideal generated by binomials.  Binomial ideals abound as the
defining ideals of classical algebraic varieties, particularly because
equivariantly embedded affine or projective toric varieties 
correspond to prime binomial ideals.

In fact, the zero set of any binomial ideal is a union of (translated)
toric varieties.  Thus, binomial ideals are ``easy'' in geometric
terms, and one may hope that their algebra is simple as well.  This is
indeed the case: the associated primes of a binomial ideal are
essentially toric ideals, and their corresponding primary components
can be chosen binomial as well. 
These results, due to Eisenbud and Sturmfels \cite{binomialideals},
are concrete when it comes to specifying associated primes, but less
so when it comes to 
primary components themselves, in part because of difficulty in
identifying the monomials therein.  

The main goal of this article is
to provide explicit lattice-point combinatorial realizations of the
primary components of an arbitrary binomial ideal in a polynomial ring
over an algebraically closed field of characteristic zero; this is
achieved in Theorems~\ref{t:components} and~\ref{t:toral}.  These are
proved by way of our other core result, Theorem~\ref{t:zerocomp},
which combinatorially characterizes, in the setting of an affine
semigroup ring over an arbitrary field (not required to be
algebraically closed or of characteristic zero), primary binomial
ideals whose associated prime comes from a face of the semigroup.

The hypotheses on the field~$\kk$ are forced upon us, when they occur.
Consider the univariate case: in the polynomial ring $\kk[x]$, primary
decomposition is equivalent to factorization of polynomials.
Factorization into binomials in this setting is the fundamental
theorem of algebra, which requires $\kk$ to be algebraically closed.
On the other hand, $\kk$ must have characteristic zero because of the
slightly different behavior of binomial primary decomposition in
positive characteristic \cite{binomialideals}: in characteristic zero,
every primary binomial ideal contains all of the non-monomial (i.e.,
two-term binomial) generators of its associated prime, but this is
false in positive characteristic.

The motivation and inspiration for this work came from the theory of
hypergeometric differential equations, and the results here are used
heavily in the companion article \cite{dmm} (see
Section~\ref{s:applications} for an overview of these applications).
In fact, these two projects began with a conjectural expression for
the non fully suppported solutions of a Horn hypergeometric system; its proof reduced
quickly to the statement of Corollary~\ref{c:I(B)}, which directed all
of the developments here.  Our consequent use of $M$-subgraphs
(Definition~\ref{d:M}), and more generally the application of
commutative monoid congruences toward the primary decomposition of
binomial ideals, serves as an advertisement for hypergeometric
intuition as inspiration for developments of independent interest in
combinatorics and commutative algebra.

The explicit lattice-point binomial primary decompositions in
Sections~\ref{s:primary} and~\ref{s:toral} have potential
applications beyond hypergeometric systems.  Consider the special case
of monomial ideals: certain constructions at the interface between
commutative algebra and algebraic geometry, such as integral closure
and multiplier ideals, admit concrete convex polyhedral descriptions.
The path is now open to attempt analogous constructions for binomial
ideals.

%%%%%%%%%%%%%%%%%%%%%%%%%%%%%%%%%%%%%%%%%%%%%%%%%%%%%%%%%%%%%%%%%%%%%%%%%
\subsection*{Overview of results}%%%%%%%%%%%%%%%%%%%%%%%%%%%%%%%%%%%%%%%%
%%%%%%%%%%%%%%%%%%%%%%%%%%%%%%%%%%%%%%%%%%%%%%%%%%%%%%%%%%%%%%%%%%%%%%%%%

The central combinatorial idea behind binomial primary decomposition
is elementary and concrete, so we describe it geometrically here.  The
notation below is meant to be intuitive, but in any case it coincides
with the notation formally defined later.

Fix an arbitrary binomial ideal $I$ in a polynomial ring, or more
generally in any affine semigroup ring $\kk[Q]$.  Then $I$ determines
an equivalence relation (\emph{congruence}) on~$\kk[Q]$ in which two
elements $u,v \in Q$ are congruent, written $u \sim v$, if $\ttt^u -
\lambda\ttt^v \in I$ for some $\lambda \neq 0$.  If one makes a graph
with vertex set $Q$ by drawing an edge from $u$ to $v$ when $u \sim
v$, then the congruence classes of $\sim$ are the connected components
of this graph.  For example, for an integer matrix $A$ with $n$
columns, the \emph{toric ideal}\/
\[
  I_A = \<\ttt^u - \ttt^v \mid u,v \in \NN^n \text{ and } Au = Av\>
  \subseteq \CC[\del_1,\ldots,\del_n] = \CC[\ttt] = \CC[\NN^n]
\] 
determines the congruence in which the class of $u \in \NN^n$ consists
of the set $(u + \ker(A)) \cap \NN^n$ of lattice points in the
polyhedron
$
  \{\alpha \in \RR^n \mid A\alpha = Au \text{ and } \alpha \geq 0\}.
$

The set of congruence classes for $I_A$ can be thought of as a
periodically spaced archipelago whose islands have roughly similar
shape (but steadily grow in size as the class moves interior to~$\NN^n$).
With this picture in mind, the congruence classes for any binomial
ideal~$I$ come in a finite family of such archipelagos, but instead of
each island coming with a shape predictable from its archipelago, its
boundary becomes fragmented into a number of surrounding skerries.
The extent to which a binomial ideal deviates from primality is
measured by which bridges must be built---and in which directions---to
join the islands to their skerries.

When $Q = \NN^n$ is an orthant as in the previous example, each prime
binomial ideal in $\CC[Q] = \CC[\NN^n]$ equals the sum $\pp + \mm_J$
of a prime binomial ideal~$\pp$ containing no monomials and a prime
monomial ideal~$\mm_J$ generated by the variables whose indices lie
outside of a subset $J
\hspace{-.5pt}\subseteq\hspace{-.5pt} \{1,\ldots,n\}$.  The ideal
$\pp$ is a toric ideal after rescaling the variables, so its
congruence classes are parallel to a sublattice $L = L_\pp \subseteq
\ZZ^J$; in the notation above, $L = \ker(A)$.  Now suppose that $\pp +
\mm_J$ is associated to our binomial ideal $I$.  Joining the
aforementioned skerries to their islands is accomplished by
considering congruences defined by $I + \pp$: a bridge is built from
$u$ to~$v$ whenever $u - v \in L$.

To be more accurate, just as $I + \pp$ determines a congruence
on~$\NN^n$, it determines one---after inverting the variables outside
of~$\mm_J$---on $\ZZ^J \times \NN^\oJ$, where
% EM: rephrased the following to avoid unattractive ``$J$, $\ZZ^J$''
$\ZZ^J = \mathrm{span}_\ZZ \{e_j \mid j \in J\}$, and $\NN^{\oJ}$ is
defined analogously for the index subset~$\oJ$ complementary to~$J$.
% (Note that $\pp \subseteq \CC[\NN^J]$.)
% EM: The previous line is technically incorrect: I agree that the
% generators of $\pp$ lie in that subring, but the ideal $\pp$ itself
% does not, by definition.
Each resulting class in $\ZZ^J \times \NN^\oJ$ is acted upon by~$L$
and hence is a union of cosets of~$L$.  The key observation is that,
when $\pp + \mm_J$ is an associated prime of~$I$, some of these
classes consist of finitely many cosets of~$L$; let us call these
\emph{$L$-bounded} classes.  The presence of $L$-bounded classes
signals that $L$ is ``sufficiently parallel'' to the congruence
determined by~$I$, and this is how we visualize the manner in which
$\pp + \mm_J$ is associated to~$I$.

Intersecting the $L$-bounded $\ZZ^J \times \NN^\oJ$ congruence classes
with $\NN^n$ yields \emph{$L$-bounded} classes in~$\NN^n$; again,
these are constructed more or less by building bridges in directions
from~$L$ to join the classes defined by~$I$.  When the prime $\pp +
\mm_J$ is minimal over~$I$, there are only finitely many $L$-bounded
classes in $\NN^n$.  In this case, the primary component
$C_{\pp+\mm_J}$ of~$I$ is well-defined, as reflected in its
combinatorics: the congruence defined on~$\NN^n$ by $C_{\pp+\mm_J}$
has one huge class consisting of the lattice points in~$\NN^n$ lying
in no $L$-bounded class, and each of its remaining classes is
$L$-bounded in~$\NN^n$; this is the content of
Theorem~\ref{t:components}.1.  The only difference for a nonminimal
associated prime of~$I$ is that the huge class is inflated by
swal\-lowing all but a sufficiently large finite number of the
$L$-bounded classes; this is the content of the remaining parts of
Theorem~\ref{t:components}.  Here, ``sufficiently large'' means that
every swallowed $L$-bounded class contains a lattice point~$u$ with
$\ttt^u$ lying \mbox{in a fixed high power of~$\mm_J$}.

In applications, binomial ideals often arise in the presence of
multigradings.  (One reason for this is that binomial structures are
closely related to algebraic tori, whose actions on varieties induce
multigradings on coordinate rings.)  In this context, the matrix~$A$
above induces the grading: monomials $\ttt^u$ and $\ttt^v$ have equal
degree if and only if $Au = Av$.  Theorem~\ref{t:toral} expounds on
the observation that if $L = \ker(A) \cap \ZZ^J$, then a congruence
class for $I + \pp$ in $\ZZ^J \times \NN^\oJ$ is $L$-bounded if and
only if its image in $\NN^\oJ$ is finite.  This simplifies the
description of the primary components because, to describe the set of
monomials in a primary component, it suffices to refer to lattice
point geometry in $\NN^\oJ$, without mentioning $\ZZ^J \times
\NN^\oJ$.

When it comes to proofs, the crucial insight is that the geometry of
$L$-bounded classes for the congruence determined by $I + \pp$ gives
rise to simpler algebra when $\ZZ^J \times \NN^\oJ$ is reduced modulo
the action of~$L$.  Equivalently, instead of considering the
associated prime $\pp + \mm_J$ of an arbitrary binomial ideal~$I$
in~$\CC[t_1,\ldots,t_n]$, consider the prime image of the monomial
ideal $\mm_J$ associated to the image of~$I$ in $\CC[Q] =
\CC[\ttt]/\pp$, where $Q = \NN^n/L$.  Since monomial primes in an
affine semigroup ring $\CC[Q]$ correspond to faces of~$Q$, the lattice
point geometry is more obvious in this setting, and the algebra is
sufficiently uncomplicated that it works over an arbitrary field in
place of~$\CC$.  If $\Phi$ is a face of an arbitrary affine
semigroup~$Q$ whose corresponding prime $\pp_\Phi$ is minimal over a
binomial ideal~$I$ in~$\kk[Q]$, then $I$ determines a congruence on
the semigroup $Q + \ZZ\Phi$ obtained from $Q$ by allowing negatives
for~$\Phi$.  The main result in this context,
Theorem~\ref{t:zerocomp}, says that the monomials $\ttt^u$ in the
$\pp_\Phi$-primary component of~$I$ are precisely those corresponding
\enlargethispage*{0ex}
to lattice points $u \in Q$ not lying in any finite congruence class
of $Q + \ZZ\Phi$.  This, in turn, is proved by translating lattice
point geometry and combinatorics into semigroup-graded commutative
algebra in Proposition~\ref{p:graded}.

%%%%%%%%%%%%%%%%%%%%%%%%%%%%%%%%%%%%%%%%%%%%%%%%%%%%%%%%%%%%%%%%%%%%%%%%%
\subsection*{Acknowledgments}%%%%%%%%%%%%%%%%%%%%%%%%%%%%%%%%%%%%%%%%%%%%
%%%%%%%%%%%%%%%%%%%%%%%%%%%%%%%%%%%%%%%%%%%%%%%%%%%%%%%%%%%%%%%%%%%%%%%%%

This project benefited greatly from visits by its various authors to
the University of Pennsylvania, Texas A\&M University, the Institute
for Mathematics and its Applications (IMA) in Minneapolis, the
University of Minnesota, and the Centre International de Rencontres
Math\'ematiques in Luminy (CIRM).  We thank these institutions for
their gracious hospitality.

%%%%%%%%%%%%%%%%%%%%%%%%%%%%%%%%%%%%%%%%%%%%%%%%%%%%%%%%%%%%%%%%%%%%%%%%%
\section{Binomial ideals in affine semigroup rings}%%%%%%%%%%%%%%%%%%%%%%
%%%%%%%%%%%%%%%%%%%%%%%%%%%%%%%%%%%%%%%%%%%%%%%%%%%%%%%%%%%%%%%%%%%%%%%%%
\label{s:semigroup}

Our eventual goal is to analyze the primary components of binomial
ideals in polynomial rings over the complex numbers~$\CC$ or any
algebraically closed field of characteristic zero.  Our principal
result along these lines (Theorem~\ref{t:components}) is little more
than a rephrasing of a statement (Theorem~\ref{t:zerocomp}) about
binomial ideals in arbitrary affine semigroup rings in which the
associated prime comes from a face, combined with results of Eisenbud
and Sturmfels \cite{binomialideals}.  The developments here stem from
the observation that quotients by binomial ideals are naturally graded
by noetherian commutative monoids.  Our source for such monoids is
Gilmer's excellent book \cite{gilmer}.  For the special case of affine
semigroups, by which we mean finitely generated submonoids of free
abelian groups, see \cite[Chapter~7]{cca}.  We work in this section
over an arbitrary field~$\kk$, so it might be neither algebraically
closed nor of \mbox{characteristic~zero}.

\begin{defn}\label{d:congruence}
A \emph{congruence} on a commutative monoid~$Q$ is an equivalence
relation $\sim$ with
\[
  u \,\sim\, v\ \implies\ u\!+\!w \,\sim\, v\!+\!w \quad\text{for all
  } w \in Q.
\]
The \emph{quotient monoid} $Q/$$\sim$ is the set of equivalence
classes under addition.
\end{defn}

\begin{defn}\label{d:IQL}
The \emph{semigroup algebra} $\kk[Q]$ is the direct sum $\bigoplus_{u
\in Q} \kk \cdot \ttt^u$, with multiplication $\ttt^u \ttt^v =
\ttt^{u+v}$.  Any congruence $\sim$ on~$Q$ induces a
$(Q/$$\sim)$-grading on $\kk[Q]$ in which the \emph{monomial}\/
$\ttt^u$ has degree~$\Gamma \in Q/$$\sim$ whenever $u \in \Gamma$.  A
\emph{binomial ideal}\/ $I \subseteq \kk[Q]$ is an ideal generated by
\emph{binomials} $\ttt^u - \lambda\ttt^v$, where $\lambda \in \kk$ is
a scalar, possibly equal to zero.
% A \emph{pure difference}\/ binomial ideal is generated by
% differences of monic monomials.
\end{defn}

\begin{example}\label{ex:Q=NN}
A \emph{pure difference}\/ binomial ideal is generated by differences
of monic monomials.  Given an integer matrix $M$ with $q$~rows, we
call $I(M) \subseteq \kk[\del_1,\ldots,\del_q] = \kk[\NN^q]$ the pure
difference binomial ideal
\begin{align}\label{eq:IM}
  I(M)
  &=
  \<\ttt^u - \ttt^v \mid u - v \text{ is a column of }
  M, \, u, v \in \NN^q\> 
\\\nonumber
  &=
  \<\ttt^{w_+}-\ttt^{w_-} \mid w = w_+ - w_- \text{ is a column of }M\>.
\end{align}
Here and in the remainder of this article we adopt the convention
that, for an integer vector $w \in \ZZ^q$, the vector $w_+$ has
$i^{\rm th}$ coordinate $w_i$ if $w_i\geq 0$ and $0$ otherwise.  The
vector $w_- \in \NN^q$ is defined by $w_+ - w_- = w$, or equivalently,
$w_- = (-w)_+$.  If the columns of $M$ are linearly independent, the
ideal $I(M)$ is called a \emph{lattice basis ideal} (cf.\
Example~\ref{ex:I(B)}).  An ideal of $\kk[\del_1,\ldots,\del_q]$ has
the form described in~(\ref{eq:IM}) if and only if it is generated by
differences of monomials with disjoint~support.

The equality of the two definitions in~(\ref{eq:IM}) is easy to see:
the ideal in the first line of the display contains the ideal in the
second line by definition; and the disjointness of the supports of
$w_+$ and~$w_-$ implies that whenever $u - v = w$ is a column of~$M$,
and denoting by $\alpha:= u - w_+ = u - w_-$, we have that the
corresponding generator of the first ideal $\ttt^u - \ttt^v =
\ttt^\alpha (\ttt^{w_+} - \ttt^{w_-})$, lies in the second ideal.
\end{example}

\begin{prop}\label{p:sim}
A binomial ideal $I \subseteq \kk[Q]$ determines a congruence $\sim_I$
under which
\[
  u \sim_I v \text{\rm\ if } \ttt^u - \lambda\ttt^v \in I \text{\rm\ for
  some scalar } \lambda \neq 0.
\]
The ideal $I$ is graded by the quotient monoid $Q_I = Q/$$\sim_I$, and
$\kk[Q]/I$ has $Q_I$-graded Hil\-bert function~$1$ on every congruence
class except the class $\{u \in Q \mid \ttt^u \in I\}$ of monomials.
\end{prop}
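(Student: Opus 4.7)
The plan is to verify, in order: (i) that $\sim_I$ is an equivalence relation and a congruence; (ii) that $I$ is homogeneous with respect to the induced $Q_I$-grading on $\kk[Q]$; and (iii) that the Hilbert function of $\kk[Q]/I$ equals one on every non-monomial class. For step (i), reflexivity holds because $\ttt^u-\ttt^u=0\in I$; symmetry will follow by multiplying $\ttt^u-\lambda\ttt^v\in I$ by $-\lambda^{-1}$, which is legal since $\lambda\neq 0$; transitivity will combine $\ttt^u-\lambda\ttt^v$ and $\ttt^v-\mu\ttt^w$ via the identity $(\ttt^u-\lambda\ttt^v)+\lambda(\ttt^v-\mu\ttt^w)=\ttt^u-\lambda\mu\ttt^w$, with $\lambda\mu\neq 0$. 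Compatibility with the monoid addition will be immediate from $\ttt^w(\ttt^u-\lambda\ttt^v)=\ttt^{u+w}-\lambda\ttt^{v+w}\in I$.

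For step (ii), my strategy is to exhibit a generating set of $I$ consisting of $Q_I$-homogeneous elements and then deduce that the submodule $J=\bigoplus_\Gamma (I\cap\kk[Q]_\Gamma)$ equals $I$. Every binomial generator $\ttt^u-\lambda\ttt^v$ of $I$ with $\lambda\neq 0$ lies in $\kk[Q]_{[u]}$ by the very definition of $\sim_I$, and monomial generators $\ttt^u\in I$ are $Q_I$-homogeneous trivially. Multiplication by $\ttt^w$ carries $\kk[Q]_\Gamma$ into $\kk[Q]_{\Gamma+[w]}$, thanks to the congruence property from step (i), so $J$ is a $\kk[Q]$-submodule containing a generating set of $I$; hence $J=I$.

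For step (iii), I will first identify $M=\{u\in Q:\ttt^u\in I\}$ as a single $\sim_I$-class whenever it is nonempty: if $\ttt^u,\ttt^v\in I$, then $\ttt^u-\lambda\ttt^v\in I$ for every $\lambda\neq 0$, so $u\sim_I v$; conversely, $u\sim_I v$ together with $\ttt^u\in I$ forces $\ttt^v\in I$ by subtracting. For any other class $\Gamma$, I will fix a representative $u_0\in\Gamma$; then $\ttt^{u_0}\notin I$, and every $u\in\Gamma$ satisfies $\ttt^u\equiv\lambda_u\ttt^{u_0}\pmod{I}$ with $\lambda_u\neq 0$, so $(\kk[Q]/I)_\Gamma$ is the one-dimensional $\kk$-space spanned by the nonzero image of $\ttt^{u_0}$. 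On $M$ itself, the graded piece vanishes since all of its monomials lie in $I$. I anticipate step (ii) to be the main obstacle: it requires recognising that the congruence $\sim_I$ has been designed precisely so that $I$ admits a generating set (binomials and monomials) that is already $Q_I$-homogeneous, after which the remaining verifications reduce to formal manipulation of scalars.
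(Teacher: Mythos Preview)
Your proof is correct and follows essentially the same approach as the paper's own argument; the paper gives only the transitivity computation $\ttt^u-\lambda\lambda'\ttt^w\in I$ and the congruence step $\ttt^{u+w}-\lambda\ttt^{v+w}\in I$, then declares the rest ``similarly straightforward,'' so you have simply written out what the authors left implicit.
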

\begin{proof}
That $\sim_I$ is an equivalence relation is because $\ttt^u -
\lambda\ttt^v \in I$ and $\ttt^v - \lambda'\ttt^w \in I$ implies
$\ttt^u - \lambda\lambda'\ttt^w \in I$.  It is a congruence because
$\ttt^u - \lambda\ttt^v \in I$ implies that $\ttt^{u+w} -
\lambda\ttt^{v+w} \in I$.  The rest is similarly straightforward.
\end{proof}

\begin{example}
In the case of a pure difference binomial ideal $I(M)$ as in
Example~\ref{ex:Q=NN}, the congruence classes under~$\sim_{I(M)}$ from
Proposition~\ref{p:sim} are the $M$-subgraphs in the following
definition, which---aside from being a good way to visualize
congruence classes---will be useful later on (see Example~\ref{ex:UM}
and Corollary~\ref{c:I(B)}, as well as Section~\ref{s:applications}).
\end{example}

\begin{defn}\label{d:M}
Any integer matrix $M$ with $q$ rows defines an undirected
graph~$\Gamma(M)$ having vertex set $\NN^q$ and an edge from $u$
to~$v$ if $u - v$ or $v - u$ is a column of~$M$.  An \emph{$M$-path}\/
from $u$ to~$v$ is a path in~$\Gamma(M)$ from $u$ to~$v$.  A subset
of~$\NN^q$ is \emph{$M$-connected}\/ if every pair of vertices therein
is joined by an $M$-path passing only through vertices in the subset.
An \emph{$M$-subgraph}\/ of~$\NN^q$ is a maximal $M$-connected subset
of~$\NN^q$ (a connected component of~$\Gamma(M)$).  An $M$-subgraph is
\emph{bounded}\/ if it has finitely many vertices, and
\emph{unbounded}\/ otherwise. (See Example~\ref{ex:concrete-subgraph}
for a concrete computation and an illustrative figure.)
\end{defn}

These $M$-subgraphs bear a marked resemblance to the concept of
\emph{fiber}\/ in \cite[Chapter~4]{gb&cp}.  The interested reader will
note, however, that even if these two notions have the same flavor,
their definitions have mutually exclusive assumptions, since for a
square matrix~$M$, the corresponding matrix~$A$ in \cite{gb&cp} is
empty.

Given a face~$\Phi$ of an affine semigroup $Q \subseteq \ZZ^\ell$, the
\emph{localization} of~$Q$ along~$\Phi$ is the affine semigroup $Q +
\ZZ\Phi$ obtained from~$Q$ by adjoining negatives of the elements
in~$\Phi$.  The algebraic version of this notion is a common tool for
affine semigroup rings \cite[Chapter~7]{cca}: for each $\kk[Q]$-module
$V$, let $V[\ZZ\Phi]$ denote its \emph{homogeneous localization}\/
along~$\Phi$, obtained by inverting $\ttt^\phi$ for all $\phi \in
\Phi$.  For example, $\kk[Q][\ZZ\Phi] \cong \kk[Q+\ZZ\Phi]$.  Writing
\[
  \pp_\Phi 
   = {\rm span}_{\kk} \{ \ttt^u \mid u \in Q \minus \Phi \} 
   \subseteq \kk[Q]
\]
for the prime ideal of the face~$\Phi$, so that $\kk[Q]/\pp_\Phi =
\kk[\Phi]$ is the affine semigroup ring for~$\Phi$, we find, as a
consequence, that $\pp_\Phi[\ZZ\Phi] = \pp_{\ZZ\Phi} \subseteq
\kk[Q+\ZZ\Phi]$, because
\[
\kk[Q+\ZZ\Phi]/\pp_\Phi[\ZZ\Phi] = (\kk[Q]/\pp_\Phi)[\ZZ\Phi]= 
 \kk[\Phi][\ZZ\Phi] = \kk[\ZZ\Phi].
\]
(We write equality signs to denote canonical isomorphisms.)  For any
ideal $I \subseteq \kk[Q]$, the localization $I[\ZZ\Phi]$ equals the
extension $I\, \kk[Q+\ZZ\Phi]$ of~$I$ to $\kk[Q+\ZZ\Phi]$, and we write
\begin{equation}\label{eq:I:Phi}
  (I:\ttt^\Phi) = I[\ZZ\Phi] \cap \kk[Q],
\end{equation}
the intersection taking place in $\kk[Q+\ZZ\Phi]$.  Equivalently,
$(I:\ttt^\Phi)$ is the usual colon ideal $(I:\ttt^\phi)$ for any
element $\phi$ sufficiently interior to~$\Phi$ (for example, take
$\phi$ to be a high multiple of the sum of the generators of~$\Phi$);
in particular, $(I:\ttt^\Phi)$ is a binomial ideal when $I$ is.

For the purpose of investigating $\pp_\Phi$-primary components, the
ideal $(I:\ttt^\Phi)$ is as good as~$I$ itself, since this colon
operation does not affect such components, or better, since the
natural map from $\kk[Q]/(I:\ttt^\Phi)$ to its homogeneous
localization along~$\Phi$ is injective.  Combinatorially, what this
means is the following.

\begin{lemma}
A subset $\Gamma' \subseteq Q$ is a congruence class in
$Q_{(I:\ttt^\Phi)}$ determined by $(I:\ttt^\Phi)$ if and only if\/
$\Gamma' = \Gamma \cap Q$ for some class $\Gamma \subseteq Q+\ZZ\Phi$
under the congruence~$\sim_{I[\ZZ\Phi]}$.\qed
\end{lemma}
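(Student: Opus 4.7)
The plan is to show that the congruence $\sim_{(I:\ttt^\Phi)}$ on $Q$ is nothing but the restriction to $Q$ of the congruence $\sim_{I[\ZZ\Phi]}$ on $Q+\ZZ\Phi$, after which the claim about equivalence classes follows by a general fact about restrictions of equivalence relations.

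For the main step, fix $u,v \in Q$ and unwind the definitions. By Proposition~\ref{p:sim} applied to the ambient monoid $Q+\ZZ\Phi$, one has $u \sim_{I[\ZZ\Phi]} v$ precisely when $\ttt^u - \lambda \ttt^v \in I[\ZZ\Phi]$ for some nonzero scalar~$\lambda$. Since $u,v \in Q$, the binomial $\ttt^u - \lambda\ttt^v$ already lies in $\kk[Q]$, so this condition is equivalent to $\ttt^u - \lambda\ttt^v \in I[\ZZ\Phi] \cap \kk[Q] = (I:\ttt^\Phi)$, which by Proposition~\ref{p:sim} again is the definition of $u \sim_{(I:\ttt^\Phi)} v$. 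The formula~(\ref{eq:I:Phi}) is the only input needed; no nontrivial algebra is required.

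With the two congruences on $Q$ identified, the lemma reduces to the following general observation: if $\sim$ is an equivalence relation on a set $X$ and $Y \subseteq X$, then the equivalence classes of the restricted relation $\sim|_Y$ are exactly the nonempty sets of the form $\Gamma \cap Y$, where $\Gamma$ is a $\sim$-class. For the forward implication, given a $\sim_{(I:\ttt^\Phi)}$-class $\Gamma' \subseteq Q$, pick $u \in \Gamma'$ and let $\Gamma$ be its $\sim_{I[\ZZ\Phi]}$-class; by the identification of congruences, $\Gamma' = \Gamma \cap Q$. For the reverse implication, any nonempty $\Gamma \cap Q$ is an $\sim_{(I:\ttt^\Phi)}$-class for the same reason.

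There is no real obstacle here, as the argument is purely formal manipulation of the definitions. The only point requiring a moment of care is the implicit nonemptiness convention: a congruence class is by convention nonempty, so on the ``if'' side one should tacitly restrict to those $\Gamma \subseteq Q+\ZZ\Phi$ whose intersection with $Q$ is nonempty. Everything else follows directly from~(\ref{eq:I:Phi}) and Proposition~\ref{p:sim}.
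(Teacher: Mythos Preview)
Your proposal is correct and is exactly the kind of definition-unwinding the paper has in mind: the lemma is stated with a terminal \qed\ and no argument, so the paper treats it as immediate from~(\ref{eq:I:Phi}) and Proposition~\ref{p:sim}, which is precisely what you spell out. Your remark about the implicit nonemptiness convention is a fair clarification but not something the paper addresses.
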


\begin{lemma}\label{l:distinct}
If a congruence class $\Gamma \subseteq Q+\ZZ\Phi$ under
$\sim_{I[\ZZ\Phi]}$ has two distinct elements whose difference lies
in~$Q+\ZZ\Phi$, then for all $u \in \Gamma$ the monomial $\ttt^u$ maps
to~$0$ in the (usual inhomogeneous) localization
$(\kk[Q]/I)_{\pp_\Phi}$ inverting all elements not in~$\pp_\Phi$.
\end{lemma}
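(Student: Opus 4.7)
The plan is to convert the hypothesis into an equation of the form $\ttt^u(\ttt^r - \lambda) = 0$ inside $(\kk[Q]/I)_{\pp_\Phi}$, and then to argue that $\ttt^r - \lambda$ is already a unit there. From $v \sim_{I[\ZZ\Phi]} w$ (Proposition~\ref{p:sim} applied to $I[\ZZ\Phi] \subseteq \kk[Q+\ZZ\Phi]$) we get a nonzero scalar $\lambda$ with $\ttt^v - \lambda\ttt^w \in I[\ZZ\Phi]$. Set $r = v - w$; the hypothesis places $r$ in $Q + \ZZ\Phi$, and $r \neq 0$ because $Q + \ZZ\Phi$ embeds in a free abelian group (hence is cancellative) while $v \neq w$. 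The identity $\ttt^v = \ttt^w \cdot \ttt^r$ in $\kk[Q+\ZZ\Phi]$ rewrites this membership as
\[
  \ttt^w(\ttt^r - \lambda) = 0 \quad \text{in}\ \kk[Q+\ZZ\Phi]/I[\ZZ\Phi].
\]
For any other $u \in \Gamma$, the same proposition provides $\mu \neq 0$ with $\ttt^u = \mu\ttt^w$ in this quotient, so $\ttt^u(\ttt^r - \lambda) = 0$ there, and hence also in the further localization $(\kk[Q]/I)_{\pp_\Phi}$, since every $\ttt^\phi$ with $\phi \in \Phi$ is a unit of that local ring and therefore the map $\kk[Q+\ZZ\Phi]/I[\ZZ\Phi] \to (\kk[Q]/I)_{\pp_\Phi}$ exists by the universal property of localization.

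It remains to show $\ttt^r - \lambda$ is a unit in $(\kk[Q]/I)_{\pp_\Phi}$. Choose an expression $r = r' - \phi'$ with $r' \in Q$ and $\phi' \in \Phi$; then $r \neq 0$ forces $r' \neq \phi'$, and
\[
  \ttt^r - \lambda \;=\; \ttt^{-\phi'}\bigl(\ttt^{r'} - \lambda\, \ttt^{\phi'}\bigr).
\]
The parenthesized factor is an honest binomial in $\kk[Q]$ one of whose terms, $-\lambda\ttt^{\phi'}$, is supported on the face $\Phi$. Since $\pp_\Phi$ is the monomial ideal spanned by $\{\ttt^w : w \in Q \minus \Phi\}$, any element of $\kk[Q]$ containing a summand supported on $\Phi$ lies outside $\pp_\Phi$; hence $\ttt^{r'} - \lambda\ttt^{\phi'}$ maps to a unit in $(\kk[Q]/I)_{\pp_\Phi}$, as does $\ttt^{-\phi'}$. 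Cancelling this unit from the relation $\ttt^u(\ttt^r - \lambda) = 0$ yields $\ttt^u = 0$, as desired.

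The proof contains no algebraic obstacle, only bookkeeping: one must carefully distinguish the congruence on $Q + \ZZ\Phi$, the homogeneous localization $\kk[Q+\ZZ\Phi]/I[\ZZ\Phi]$ (which already inverts $\ttt^\Phi$), and the inhomogeneous local ring $(\kk[Q]/I)_{\pp_\Phi}$, and verify at each step that the symbols $\ttt^u$ (for $u$ possibly outside $Q$) and $\ttt^r - \lambda$ have well-defined images and behave compatibly across these three rings. The single substantive point is that a binomial in $\kk[Q]$ with even one monomial supported on the face $\Phi$ automatically escapes $\pp_\Phi$; once this is observed, everything falls into place.
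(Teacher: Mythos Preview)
Your proof is correct and follows essentially the same route as the paper's: factor the binomial $\ttt^v - \lambda\ttt^w$ as a monomial times an element that must become a unit in $(\kk[Q]/I)_{\pp_\Phi}$ because it has a term supported on~$\Phi$. The only cosmetic difference is that the paper works directly with $1 - \lambda\ttt^{w-v} \in \kk[Q+\ZZ\Phi]$ and checks it lies outside~$\pp_{\ZZ\Phi}$ by reducing modulo that prime, whereas you clear denominators via $r = r' - \phi'$ to land in $\kk[Q]$ and check the resulting binomial lies outside~$\pp_\Phi$; these are two phrasings of the same observation.
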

\begin{proof}
Suppose $v \not=w \in \Gamma$ with $w-v \in Q+\ZZ\Phi$.  The images
in~$\kk[Q]/I$ of the monomials $\ttt^u$ for $u \in \Gamma$ are nonzero
scalar multiples of each other, so it is enough to show that $\ttt^v$
maps to zero in~$(\kk[Q]/I)_{\pp_\Phi}$.  Since $w-v \in Q + \ZZ\Phi$,
we have $\ttt^{w-v} \in \kk[Q+\ZZ\Phi]$.  Therefore
$1-\lambda\ttt^{w-v}$ lies outside of~$\pp_{\ZZ\Phi}$ for all $\lambda
\in \kk$, because its image in~$\kk[\ZZ\Phi] = \kk[Q+ \ZZ
\Phi]/\pp_{\ZZ\Phi}$ is either $1-\lambda\ttt^{w-v}$ or~$1$, according
to whether or not $w-v \in \ZZ\Phi$.  (The assumption $v \neq w$ was
used here: if $v=w$, then for $\lambda=1$, we have $1-\lambda
\ttt^{w-v} = 0 $.)  Hence $1-\lambda\ttt^{w-v}$ maps to a unit
in~$(\kk[Q]/I)_{\pp_\Phi}$.  It follows that $\ttt^v$ maps to~$0$,
since $(1-\lambda_{vw}\ttt^{w-v})\ttt^v = \ttt^v-\lambda_{vw}\ttt^w$
maps to~$0$ in~$\kk[Q]/I$ whenever $\ttt^v - \lambda_{vw} \ttt^w \in
I$.
\end{proof}

\begin{lemma}\label{l:unbounded}
A congruence class $\Gamma \subseteq Q+\ZZ\Phi$ under
$\sim_{I[\ZZ\Phi]}$ is infinite if and only if it contains two
distinct elements whose difference lies in\/~$Q+\ZZ\Phi$.
\end{lemma}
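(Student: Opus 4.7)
The plan is to prove the two directions separately: the ``if'' direction follows from the congruence axiom alone, while the ``only if'' direction reduces to Noetherianity of the affine semigroup ring $\kk[R]$, where $R := Q + \ZZ\Phi$.

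For ``if'', suppose $v \neq w \in \Gamma$ with $d := w - v \in R$. From $v \sim w$ and the congruence property of $\sim_{I[\ZZ\Phi]}$, iteratively translating by $d$ yields the chain $v \sim v+d \sim v+2d \sim \cdots$. Each $v + kd$ lies in $R$ since $v, d \in R$, and these elements are pairwise distinct because $d \neq 0$ in the ambient abelian group; hence $\Gamma$ is infinite.

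For ``only if'', assume $\Gamma$ is infinite and consider the monomial ideal $J = \langle \ttt^u : u \in \Gamma \rangle \subseteq \kk[R]$. Since $R$ is a finitely generated commutative monoid, $\kk[R]$ is Noetherian, so one can extract a finite subset $u_1, \ldots, u_k \in \Gamma$ with $J = (\ttt^{u_1}, \ldots, \ttt^{u_k})$. Pick $u \in \Gamma \setminus \{u_1, \ldots, u_k\}$; then $\ttt^u \in J$, and expanding this as a $\kk[R]$-combination of the $\ttt^{u_i}$, one concludes $u = v + u_i$ for some $i$ and some $v \in R$, yielding distinct $u, u_i \in \Gamma$ with $u - u_i \in R$. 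The step requiring the most care is this last deduction: cancellativity of $R$ makes $\{\ttt^\alpha : \alpha \in R\}$ a $\kk$-basis of $\kk[R]$, so the monomial $\ttt^u$ on the left must literally appear as a summand $c \cdot \ttt^{v + u_i}$ on the right (with $c \neq 0$), rather than arising only through cancellation among terms.
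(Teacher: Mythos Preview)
Your proof is correct. The ``if'' direction is identical to the paper's. For ``only if'', however, you take a genuinely different route. The paper argues geometrically: it first handles the case where two elements of~$\Gamma$ differ by an element of~$\ZZ\Phi$, and otherwise projects modulo~$\ZZ\Phi$ (and torsion) to land in the pointed affine semigroup $\ol{Q/\Phi}$, where it invokes the fact that any infinite subset of a pointed affine semigroup contains two elements whose difference lies in the semigroup. Your argument bypasses both the case split and the projection by working directly with the monomial ideal $\langle \ttt^u : u \in \Gamma\rangle$ in the Noetherian ring $\kk[Q+\ZZ\Phi]$ and reading off divisibility from a finite monomial generating set. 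This is cleaner and more self-contained; indeed, the paper's final appeal to the ``infinite subset of a pointed semigroup'' fact is, when unwound, exactly the Noetherian monomial-ideal argument you give, so your proof can be viewed as a streamlining that removes the auxiliary quotient construction. The paper's version, on the other hand, makes visible the geometric picture (passing to the pointed quotient $\ol{Q/\Phi}$) that recurs elsewhere in the article.
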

\begin{proof}
Let $\Gamma \subseteq Q+\ZZ\Phi$ be a congruence class.  If $v,w \in
\Gamma$ and $v-w \in Q+\ZZ\Phi$, then $w + \epsilon (v-w) \in \Gamma$
for all positive $\epsilon \in \ZZ$.  On the other hand, assume
$\Gamma$ is infinite.  There are two possibilities: either there are
$v,w \in \Gamma$ with $v-w \in \ZZ\Phi$, or not.  If so, then we are
done, so assume not.  Let $\ZZ^q$ be the quotient of
$\ZZ^\ell/\ZZ\Phi$ modulo its torsion subgroup. (Here $\ZZ^\ell$ is
the ambient lattice of $Q$.)  The projection $\ZZ^\ell \to \ZZ^q$
induces a map from $\Gamma$ to its image~$\ol\Gamma$ that is
finite-to-one.  More precisely, if $\Gamma'$ is the intersection
of~$\Gamma$ with a coset of $\ZZ\Phi$ in~$\ker(\ZZ^\ell \to \ZZ^q)$,
then $\Gamma'$ maps bijectively to its image~$\ol\Gamma{}'$.  There
are only finitely many cosets, so some~$\Gamma'$ must be infinite,
along with~$\ol\Gamma{}'$.  But $\ol\Gamma{}'$ is a subset of the
affine semigroup~$\ol{Q/\Phi}$, defined as the image of $Q + \ZZ\Phi$
in~$\ZZ^q$.  As $\ol{Q/\Phi}$ has unit group zero, every infinite
subset contains two points whose difference lies in~$\ol{Q/\Phi}$, and
the corresponding lifts of these to~$\Gamma'$ have their difference in
$Q + \ZZ\Phi$.
\end{proof}

\begin{defn}\label{d:U}
Fix a face $\Phi$ of an affine semigroup~$Q$.  A subset $S \subseteq
Q$ is an \emph{ideal}\/ if $Q+S \subseteq S$, and in that case we
write $\kk\{S\} = \<\ttt^u \mid u \in S\> = {\rm span}_\kk\{\ttt^u
\mid u \in S\}$ for the monomial ideal in~$\kk[Q]$ having $S$ as its
$\kk$-basis.  An ideal~$S$ is \emph{$\ZZ\Phi$-closed}\/ if $S = Q \cap
(S + \ZZ\Phi)$.  If~$\sim$ is a congruence on $Q+\ZZ\Phi$, then the
\emph{unbounded ideal}\/ $U \subseteq Q$ is the ($\ZZ\Phi$-closed)
ideal of elements $u \in Q$ with infinite congruence class
under~$\sim$ in $Q + \ZZ\Phi$.  Finally, write $\cB(Q+\ZZ\Phi)$ for
the set of bounded (i.e.\ finite) congruence classes of $Q+\ZZ\Phi$
under $\sim$.
\end{defn}

\begin{example}\label{ex:UM}
Let $M$ be as in Definition~\ref{d:M} and consider the congruence
$\sim_{I(M)}$ on $Q = \NN^q$.  If $\Phi = \{0\}$, then the unbounded
ideal $U \subseteq \NN^q$ is the union of the unbounded $M$-subgraphs
of~$\NN^q$, while $\cB(\NN^q)$ is the union of the bounded
$M$-subgraphs.
\end{example}

\begin{prop}\label{p:graded}
Fix a face $\Phi$ of an affine semigroup~$Q$, a binomial ideal $I
\subseteq \kk[Q]$, and a $\ZZ\Phi$-closed ideal $S \subseteq Q$
containing $U$ under the congruence $\sim_{I[\ZZ\Phi]}$.  Write $\cB =
\cB(Q+\ZZ\Phi)$ for the bounded classes, $J$ for the binomial ideal
$(I:\ttt^\Phi) + \kk\{S\}$, and $\ol Q = (Q+\ZZ\Phi)_{I[\ZZ\Phi]}$.
\begin{numbered}
\item
$\kk[Q]/J$ is graded by $\ol Q$, and its set of nonzero degrees is
contained in~$\cB$.
\item
The group $\ZZ\Phi \subseteq \ol Q$ acts freely on~$\cB$, and the
$\kk[\Phi]$-submodule $(\kk[Q]/J)_T \subseteq \kk[Q]/J$ in degrees
from any orbit $T \subseteq \cB$ is~$0$ or finitely generated and
torsion-free of rank~$1$.
\item
The quotient $Q_J/\Phi$ of the monoid $(Q+\ZZ\Phi)_{J[\ZZ\Phi]}$ by
its subgroup $\ZZ\Phi$ is a partially ordered set if we define $\zeta
\preceq \eta$ whenever $\zeta + \xi = \eta\,$ for some $\xi \in
Q_J/\Phi$.
\item
$\kk[Q]/J$ is filtered by $\ol Q$-graded\/ $\kk[Q]$-submodules with
associated graded module
\[
  {\rm gr}(\kk[Q]/J) = \bigoplus_{T \in \cB/\Phi} (\kk[Q]/J)_T, \quad
  \text{where } \cB/\Phi = \{\ZZ\Phi\text{\rm -orbits }T \subseteq \cB\},
\]
the canonical isomorphism being as $\cB$-graded $\kk[\Phi]$-modules,
although the left-hand side is naturally a $\kk[Q]$-module annihilated
by~$\pp_\Phi$.
% (torsion-free over $\kk[Q]/\pp_\Phi$).
\item
If\/ $(\kk[Q]/J)_T \neq 0$ for only finitely many orbits $T \in
\cB/\Phi$, then $J$ is a $\pp_\Phi$-primary ideal.
\end{numbered}
\end{prop}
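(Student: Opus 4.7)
The plan is to read off the associated primes of $\kk[Q]/J$ from the filtration in part~(4), using part~(2) together with the finiteness hypothesis to reduce to finitely many torsion-free rank-one pieces over the domain~$\kk[\Phi]$. Under the finiteness hypothesis, only finitely many terms of the filtration produce nonzero successive quotients, while every other step is an equality of $\kk[Q]$-submodules. Consequently $\kk[Q]/J$ carries a finite filtration
\[
  0 = F_0 \subset F_1 \subset \cdots \subset F_n = \kk[Q]/J
\]
by $\ol Q$-graded $\kk[Q]$-submodules whose successive quotients $F_i/F_{i-1}$ are exactly the nonzero pieces $(\kk[Q]/J)_T$ for $T \in \cB/\Phi$.

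Next I would determine $\mathrm{Ass}_{\kk[Q]}(F_i/F_{i-1})$ for each~$i$. By part~(2), each $F_i/F_{i-1}$ is a finitely generated torsion-free $\kk[\Phi]$-module of rank one, and $\kk[\Phi]$ is an integral domain because $\Phi$ is a sub\-monoid of a free abelian group, so $\kk[\Phi]$ embeds in the Laurent polynomial ring $\kk[\ZZ\Phi]$. Any nonzero torsion-free module $M$ over a Noetherian domain $R$ has $\mathrm{Ass}_R(M) = \{(0)\}$, since $\mathrm{Ann}_R(x) = 0$ for every nonzero $x \in M$. By part~(4) the $\kk[Q]$-action on $F_i/F_{i-1}$ factors through $\kk[Q]/\pp_\Phi = \kk[\Phi]$, so pulling back the zero ideal along this surjection gives $\mathrm{Ass}_{\kk[Q]}(F_i/F_{i-1}) = \{\pp_\Phi\}$.

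Finally I would invoke the standard fact that for any finite filtration, $\mathrm{Ass}_{\kk[Q]}(\kk[Q]/J) \subseteq \bigcup_i \mathrm{Ass}_{\kk[Q]}(F_i/F_{i-1}) = \{\pp_\Phi\}$. Because $\kk[Q]/J$ is nonzero (at least one piece $(\kk[Q]/J)_T$ is nonzero, for otherwise the claim is vacuous), this forces $\mathrm{Ass}_{\kk[Q]}(\kk[Q]/J) = \{\pp_\Phi\}$, which is the definition of $J$ being $\pp_\Phi$-primary. The one step that requires mild care is the collapse in the first paragraph to a finite $\kk[Q]$-submodule filtration: one must use the partial order from part~(3) to enumerate the nonzero orbits in a way compatible with the filtration structure coming from part~(4). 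Past this bookkeeping, the remainder is a textbook application of associated-prime arguments for a finite filtration over a Noetherian ring.
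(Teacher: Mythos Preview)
Your proposal addresses only item~5, taking items~1--4 as already established; the statement as given is the full proposition, so strictly speaking the argument for items~1--4 is missing. That said, for item~5 your approach is correct and is exactly what the paper does: the paper's proof of item~5 is a single sentence observing that items~2 and~4 yield a finite filtration whose associated graded pieces have $\pp_\Phi$ as their only associated prime, whence $\mathrm{Ass}(\kk[Q]/J) \subseteq \{\pp_\Phi\}$. Your write-up simply unpacks this sentence---identifying each $(\kk[Q]/J)_T$ as torsion-free of rank one over the domain $\kk[\Phi]$ (hence $\mathrm{Ass}_{\kk[\Phi]} = \{(0)\}$), pulling back along $\kk[Q] \twoheadrightarrow \kk[\Phi]$, and invoking the standard containment of associated primes under finite filtrations---and correctly flags the role of item~3 in well-ordering the orbits so that the filtration of item~4 collapses to a finite one.
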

\begin{proof}
The quotient $\kk[Q]/(I:\ttt^\Phi)$ is automatically $\ol Q$-graded by
Proposition~\ref{p:sim} applied to $Q+\ZZ\Phi$ and $I[\ZZ\Phi]$,
given~(\ref{eq:I:Phi}).  The further quotient by~$\kk\{S\}$ is graded
by $\cB$ because~\mbox{$S \supseteq U$}.

$\ZZ\Phi$ acts freely on~$\cB$ by Lemmas~\ref{l:distinct}
and~\ref{l:unbounded}: if $\phi \in \ZZ\Phi$ and $\Gamma$ is a bounded
congruence class, then the translate $\phi + \Gamma$ is, as well; and
if $\phi \neq 0$ then $\phi + \Gamma \neq \Gamma$, because each coset
of~$\ZZ\Phi$ intersects $\Gamma$ at most once.  Combined with the
$\ZZ\Phi$-closedness of~$S$, this shows that $\kk[Q]/J$ is a
$\kk[\Phi]$-submodule of the free $\kk[\ZZ\Phi]$-module whose basis
consists of the $\ZZ\Phi$-orbits $T \subseteq \cB$.  Hence
$(\kk[Q]/J)_T$ is torsion-free (it might be zero, of course, if $S$
happens to contain all of the monomials corresponding to congruence
classes of~$Q$ arising from $\sim_{I[\ZZ\Phi]}$ classes in~$T$).  For
item~2, it remains to show that $(\kk[Q]/J)_T$ is finitely generated.
Let \mbox{$\TT = \bigcup_{\Gamma \in T} \Gamma \cap Q$}.  By
construction, $\TT$ is the (finite) union of the intersections $Q \cap
(\gamma + \ZZ\Phi)$ of $Q$ with cosets of~$\ZZ\Phi$ in~$\ZZ^\ell$ for
$\gamma$ in any fixed $\Gamma \in T$.  Such an intersection is a
finitely generated $\Phi$-set (a set closed under addition by~$\Phi$)
by \cite[Eq.~(1) and Lemma~2.2]{irredres} or
\cite[Theorem~11.13]{cca}, where the $\kk$-vector space it spans is
identified as the set of monomials annihilated by~$\kk[\Phi]$ modulo
an irreducible monomial ideal of~$\kk[Q]$.  The images in $\kk[Q]/J$
of the monomials corresponding to any generators for these $\Phi$-sets
generate $(\kk[Q]/J)_T$.

The point of item~3 is that the monoid $Q_J/\Phi$ acts sufficiently
like an affine semigroup whose only unit is the trivial one.  To prove
it, observe that $Q_J/\Phi$ consists, by item~1, of the (possibly
empty set of) orbits $T \in \cB$ such that $(\kk[Q]/J)_T \neq 0$ plus
one congruence class $\ol S$ for the monomials in~$J$ (if there are
any).  The proposed partial order has $T \prec \ol S$ for all orbits
$T \in Q_J/\Phi$, and also $T \prec T+v$ if and only if $v \in
(Q+\ZZ\Phi) \minus \ZZ\Phi$.  This relation~$\prec$ a~priori defines a
directed graph with vertex set~$Q_J/\Phi$, and we need it to have no
directed cycles.  The terminal nature of~$\ol S$ implies that no cycle
can contain~$\ol S$, so suppose that $T = T+v$.  For some $\phi \in
\ZZ\Phi$, the translate $u + \phi$ lies in the same congruence class
under $\sim_{I[\ZZ\Phi]}$ as $u + v$.  Lemma~\ref{l:unbounded} implies
that $v-\phi$, and hence $v$ itself, does not lie in $Q+\ZZ\Phi$.

For item~4, it suffices to find a total order $T_0,T_1,T_2,\ldots$ on
$\cB/\Phi$ such that \mbox{$\bigoplus_{j \geq k} (\kk[Q]/J)_{T_j}$} is a
$\kk[Q]$-submodule for all $k \in \NN$.  Use the partial order of
$\cB/\Phi$ via its inclusion in the monoid $Q_J/\Phi$ in item~3 for $S =
U$.  Any well-order refining this partial order will do.

Item~5 follows from items~2 and~4 because the associated primes of
${\rm gr}(\kk[Q]/J)$ contain every associated prime of~$J$ for any
finite filtration of $\kk[Q]/J$ by $\kk[Q]$-submodules.
\end{proof}

For connections with toral modules (Definition~\ref{d:toral}), we
record the following.

\begin{cor}\label{c:graded}
Fix notation as in Proposition~\ref{p:graded}.  If $I$ is homogeneous
for a grading of\/~$\kk[Q]$ by a group~$\cA$
via a monoid morphism $Q \to \cA$, then $\kk[Q]/J$ and ${\rm
gr}(\kk[Q]/J)$ are $\cA$-graded via a natural coarsening $\cB \to \cA$
that restricts to a group homomorphism $\ZZ\Phi \to \cA$.
\end{cor}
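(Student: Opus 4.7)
The plan is to lift the $\cA$-grading from $\kk[Q]$ to $\kk[Q+\ZZ\Phi]$, and then trace it through the congruence quotients and colon/ideal operations that produce $\kk[Q]/J$ and ${\rm gr}(\kk[Q]/J)$. All the work has essentially already been done in Propositions~\ref{p:sim} and~\ref{p:graded}; what remains is the straightforward verification that $\cA$-homogeneity is preserved by each construction in sight.

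Since $\cA$ is a group, the monoid morphism $\tau\colon Q \to \cA$ defining the $\cA$-grading extends uniquely to a monoid morphism $\tilde\tau\colon Q + \ZZ\Phi \to \cA$ by forcing $\tilde\tau(-\phi) = -\tau(\phi)$ for each $\phi \in \Phi$; the restriction of $\tilde\tau$ to the subgroup $\ZZ\Phi$ is automatically a group homomorphism. This endows $\kk[Q+\ZZ\Phi]$ with an $\cA$-grading in which $I[\ZZ\Phi] = I\cdot\kk[Q+\ZZ\Phi]$ is $\cA$-homogeneous, since its generators are inherited from the $\cA$-homogeneous generators of $I$.

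Applying Proposition~\ref{p:sim} to $(Q+\ZZ\Phi, I[\ZZ\Phi])$ now shows that any binomial relation $\ttt^v - \lambda\ttt^w \in I[\ZZ\Phi]$ with $\lambda \neq 0$ forces $\tilde\tau(v) = \tilde\tau(w)$, so $\tilde\tau$ factors through a coarsening $\bar Q \to \cA$ of monoid maps. Restricting to $\cB \subseteq \bar Q$ yields the asserted coarsening $\cB \to \cA$, and its restriction to $\ZZ\Phi$ agrees with the group homomorphism $\tilde\tau|_{\ZZ\Phi}$.

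It remains to see that $J = (I:\ttt^\Phi) + \kk\{S\}$ is $\cA$-homogeneous. The colon $(I:\ttt^\Phi) = I[\ZZ\Phi] \cap \kk[Q]$ is an intersection of two $\cA$-homogeneous ideals inside the $\cA$-graded ring $\kk[Q+\ZZ\Phi]$, and the monomial ideal $\kk\{S\}$ is $\cA$-homogeneous because every monomial ideal is homogeneous for any monomial multigrading. Hence $\kk[Q]/J$ carries an $\cA$-grading that coarsens its $\cB$-grading from Proposition~\ref{p:graded}.1 via $\cB \to \cA$. Moreover, the filtration in Proposition~\ref{p:graded}.4 is by $\cB$-graded (and therefore $\cA$-graded) submodules, so ${\rm gr}(\kk[Q]/J)$ inherits the $\cA$-grading. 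I do not foresee any serious obstacle; the delicate point is simply to notice that the hypothesis that $\cA$ is a group (rather than merely a monoid) is exactly what allows $\tau$ to extend across the localization $Q \subseteq Q + \ZZ\Phi$.
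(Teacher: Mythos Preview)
Your proof is correct and follows essentially the same route as the paper: extend the grading morphism to $Q+\ZZ\Phi$ using that $\cA$ is a group (the paper phrases this as the universal property of monoid localization), observe that $\cA$-homogeneity of $I[\ZZ\Phi]$ makes $\tilde\tau$ constant on the non-monomial congruence classes, and restrict to~$\cB$.  One small slip worth noting: $\tilde\tau$ need not factor through all of $\bar Q$ as a monoid map, since the monomial class can contain elements of different $\cA$-degrees and $\cA$ has no absorbing element; but the monomial class is infinite whenever nonempty and hence never lies in $\cB$, so your restriction $\cB\to\cA$ is well-defined and the rest of the argument is unaffected.
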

\begin{proof}
The morphism $Q \to \cA$ induces a morphism $\pi_\cA: Q+\ZZ\Phi \to
\cA$ by the universal property of monoid localization.  The
morphism~$\pi_\cA$ is constant on the non-monomial congruence classes
in~$Q_I$ precisely because $I$ is $\cA$-graded.  It follows that
$\pi_\cA$ is constant on the non-monomial congruence classes
in~$(Q+\ZZ\Phi)_{I[\ZZ\Phi]}$.  In particular, $\pi_\cA$ is constant
on the bounded classes $\cB(Q+\ZZ\Phi)$, which therefore map to~$\cA$ to
yield the natural coarsening.  The group homomorphism $\ZZ\Phi \to
\cA$ is induced by the composite morphism $\ZZ\Phi \to (Q+\ZZ\Phi) \to
\cA$, which identifies the group $\ZZ\Phi$ with the $\ZZ\Phi$-orbit
in~$\cB$ containing (the class of)~$0$.
\end{proof}

\begin{theorem}\label{t:zerocomp}
Fix a face $\Phi$ of an affine semigroup~$Q$ and a binomial ideal $I
\subseteq \kk[Q]$.  If\/ $\pp_\Phi$ is minimal over~$I$, then the
$\pp_\Phi$-primary component of~$I$ is $(I:\ttt^\Phi) + \kk\{U\}$,
where $(I:\ttt^\Phi)$ is the binomial ideal~(\ref{eq:I:Phi}) and $U
\subseteq Q$ is the unbounded ideal (Definition~\ref{d:U}) for
$\sim_{I[\ZZ\Phi]}$.  Furthermore, the only monomials in
$(I:\ttt^\Phi) + \kk\{U\}$ are those of the form $\ttt^u$ for $u \in
U$.
\end{theorem}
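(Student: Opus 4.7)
The plan is to identify $J := (I:\ttt^\Phi) + \kk\{U\}$ with the $\pp_\Phi$-primary component $C_{\pp_\Phi}$ of $I$ by double inclusion. Since $\pp_\Phi$ is minimal over $I$, one has $C_{\pp_\Phi} = \{f \in \kk[Q] : gf \in I \text{ for some } g \notin \pp_\Phi\}$, which is the smallest $\pp_\Phi$-primary ideal containing $I$ (any $\pp_\Phi$-primary ideal $J'\supseteq I$ absorbs such $f$, since $gf\in J'$ with $g \notin \sqrt{J'} = \pp_\Phi$ forces $f\in J'$).

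For $J \subseteq C_{\pp_\Phi}$: the generators of $(I:\ttt^\Phi)$ are annihilated modulo $I$ by some $\ttt^\phi \notin \pp_\Phi$, hence lie in $C_{\pp_\Phi}$; and for $u \in U$, Lemma~\ref{l:unbounded} supplies two distinct elements in the $\sim_{I[\ZZ\Phi]}$-class of $u$ whose difference lies in $Q+\ZZ\Phi$, so Lemma~\ref{l:distinct} forces $\ttt^u \mapsto 0$ in $(\kk[Q]/I)_{\pp_\Phi}$, i.e.\ $\ttt^u \in C_{\pp_\Phi}$. For the reverse inclusion $C_{\pp_\Phi} \subseteq J$, the smallest-primary property reduces the task to proving $J$ is $\pp_\Phi$-primary, and Proposition~\ref{p:graded}.5 (applied with $S=U$) reduces that further to showing only finitely many orbits $T \in \cB/\Phi$ satisfy $(\kk[Q]/J)_T \neq 0$. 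This finiteness is the main obstacle. The plan is to exploit minimality of $\pp_\Phi$ over $I$: it forces $\pp_\Phi\kk[Q]_{\pp_\Phi}$ to be the unique associated prime of $(\kk[Q]/I)_{\pp_\Phi}$ (any associated prime of $\kk[Q]/I$ contained in $\pp_\Phi$ must be a minimal prime over $I$, hence equal to $\pp_\Phi$), so this module—and therefore its quotient $(\kk[Q]/J)_{\pp_\Phi}$—has finite length over $\kk[Q]_{\pp_\Phi}$. The filtration of Proposition~\ref{p:graded}.4 is preserved by localization and yields a descending filtration on $(\kk[Q]/J)_{\pp_\Phi}$ with successive quotients $((\kk[Q]/J)_T)_{\pp_\Phi}$. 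By item~2 of that proposition, every nonzero $(\kk[Q]/J)_T$ is a finitely generated torsion-free rank-$1$ module over $\kk[\Phi] = \kk[Q]/\pp_\Phi$, and so localizes to a one-dimensional space over the residue field $\mathrm{Frac}(\kk[\Phi])$ of $\kk[Q]_{\pp_\Phi}$, contributing exactly $1$ to the length. A finite composition length therefore caps the number of nonzero orbits.

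For the final claim that the only monomials in $J$ are the $\ttt^u$ with $u \in U$, I argue contrapositively. If $u \in Q \minus U$, then the $\sim_{I[\ZZ\Phi]}$-class $\Gamma$ of $u$ is bounded, and in particular differs from the monomial class of $I[\ZZ\Phi]$, which is infinite (since $\ttt^v \in I[\ZZ\Phi]$ implies $\ttt^{v+w} \in I[\ZZ\Phi]$ for every $w \in Q+\ZZ\Phi$). By Proposition~\ref{p:sim} the $\Gamma$-graded piece of $\kk[Q+\ZZ\Phi]/I[\ZZ\Phi]$ is one-dimensional, so the image of $\ttt^u$ in $\kk[Q]/(I:\ttt^\Phi)$ is nonzero; and since $\Gamma \cap Q \cap U = \emptyset$ by boundedness, the monomial ideal $\kk\{U\}$ is zero in this graded component, so $\ttt^u$ remains nonzero in $\kk[Q]/J$, i.e.\ $\ttt^u \notin J$.
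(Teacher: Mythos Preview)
Your proof is correct and follows essentially the same route as the paper's: both show $J \subseteq C_{\pp_\Phi}$ via Lemmas~\ref{l:distinct} and~\ref{l:unbounded}, then establish equality by proving $J$ is $\pp_\Phi$-primary through Proposition~\ref{p:graded}.5, with the finiteness hypothesis coming from minimality of~$\pp_\Phi$ applied to the filtration in Proposition~\ref{p:graded}.4. Your treatment of that finiteness step (localize to get finite length, then count length-$1$ contributions from the rank-$1$ subquotients) spells out in detail what the paper compresses into a single clause, and your monomial argument likewise matches the paper's use of the $\ol Q$-grading.
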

\begin{proof}
The $\pp_\Phi$-primary component of~$I$ is the kernel of the
localization homomorphism $\kk[Q] \to (\kk[Q]/I)_{\pp_\Phi}$.  As this
factors through the homogeneous localization
$\kk[Q+\ZZ\Phi]/I[\ZZ\Phi]$, we find that the kernel contains
$(I:\ttt^\Phi)$.  Lemmas~\ref{l:distinct} and~\ref{l:unbounded} imply
that the kernel contains~$\kk\{U\}$.  But already $(I:\ttt^\Phi) +
\kk\{U\}$ is $\pp_\Phi$-primary by Proposition~\ref{p:graded}.5; the
finiteness condition there is satisfied by minimality of~$\pp_\Phi$
applied to the filtration in Proposition~\ref{p:graded}.4.
% The localization at $\pp_\Phi$ of this filtration is a filtration of
% an artinian module over the localization of $\kk[Q]$, and the
% associated graded maps injectively to its localization because it is
% torsion-free.
Thus the quotient of $\kk[Q]$ by $(I:\ttt^\Phi) + \kk\{U\}$ maps
injectively to its localization at~$\pp_\Phi$.  To prove the last
sentence of the theorem, observe that under the $\ol Q$-grading from
Proposition~\ref{p:graded}.1, every monomial~$\ttt^u$ outside
of~$\kk\{U\}$ maps to a $\kk$-vector space basis for the
($1$-dimensional) graded piece corresponding to the bounded congruence
class containing~$u$.
\end{proof}

\begin{example}
One might hope that when $\pp_\Phi$ is an embedded prime of a binomial
ideal~$I$, the $\pp_\Phi$-primary components, or even perhaps the
irreducible components, would be unique, if we require that they be
finely graded (Hilbert function $0$ or~$1$) as in
Proposition~\ref{p:graded}.  However, this fails even in simple
examples, such as $\kk[x,y]/\<x^2-xy,xy-y^2\>$.  In this case, $I =
\<x^2-xy,xy-y^2\> = \<x^2,y\> \cap \<x-y\> = \<x,y^2\> \cap \<x-y\>$
and $\Phi$ is the face $\{0\}$ of~$Q = \NN^2$, so that $I =
(I:\ttt^\Phi)$ by definition.  The monoid $Q_I$, written
multiplicatively, consists of $1$, $x$, $y$, and a single element of
degree~$i$ for each $i \geq 2$ representing the congruence class of
the monomials of total degree~$i$.  Our two choices $\<x^2,y\>$ and
$\<x,y^2\>$ for the irreducible component with associated
prime~$\<x,y\>$ yield quotients of~$\kk[x,y]$ with different
$Q_I$-graded Hilbert functions, the first nonzero in degree~$x$ and
the second nonzero in degree~$y$.
\end{example}

%%%%%%%%%%%%%%%%%%%%%%%%%%%%%%%%%%%%%%%%%%%%%%%%%%%%%%%%%%%%%%%%%%%%%%%%%
\section{Primary components of binomial ideals}%%%%%%%%%%%%%%%%%%%%%%%%%%
%%%%%%%%%%%%%%%%%%%%%%%%%%%%%%%%%%%%%%%%%%%%%%%%%%%%%%%%%%%%%%%%%%%%%%%%%
\label{s:primary}

In this section, we express the primary components of binomial ideals
in polynomial rings over the complex numbers as explicit sums of
binomial and monomial ideals.  We formulate our main result,
Theorem~\ref{t:components}, after recalling some essential results
from \cite{binomialideals}.  In this section we work with the complex
polynomial ring $\CC[\ttt]$ in (commuting)
variables $\ttt = \del_1,\ldots,\del_n$.

If $L \subseteq \ZZ^n$ is a sublattice, then with notation as in
Example~\ref{ex:Q=NN}, the \emph{lattice ideal}\/ of~$L$ is
\[
  I_L = \<\ttt^{u_+} - \ttt^{u_-} \mid u = u_+ - u_- \in L\>,
\]
More generally, any \emph{partial character} $\rho : L \to \CC^*$
of~$\ZZ^n$, which includes the data of both its domain lattice $L
\subseteq \ZZ^n$ and the map to~$\CC^*$, determines a binomial ideal
\[
  I_\rho = \<\ttt^{u_+} - \rho(u)\ttt^{u_-} \mid u = u_+ - u_- \in L\>.
\]
(The ideal~$I_\rho$ is called $I_+(\rho)$ in \cite{binomialideals}.)
The ideal $I_\rho$ is prime if and only if $L$ is a \emph{saturated}
sublattice of~$\ZZ^n$, meaning that $L$ equals its
\emph{saturation}, in general defined as
\[
  \sat(L) = (\QQ L) \cap \ZZ^n,
\]
where $\QQ L = \QQ \otimes_\ZZ L$ is the rational vector space spanned
by~$L$ in $\QQ^n$.  In fact, writing $\mm_J = \<\del_j \mid j \notin
J\>$ for any $J \subseteq \{1,\ldots,n\}$, every binomial prime ideal
in $\CC[\ttt]$ has the form
\begin{equation} 
\label{eq:IrJ}
  I_{\rho,J} = I_\rho + \mm_J
\end{equation}
for some \emph{saturated}\/ partial character~$\rho$ (i.e., whose
domain is a saturated sublattice) and subset $J$ such that the
binomial generators of~$I_\rho$ only involve variables $\del_j$ for $j
\in J$ (some of which might actually be absent from the generators
of~$I_\rho$) \cite[Corollary~2.6]{binomialideals}.

\begin{remark}\label{rem:I_A}
A rank $m$ lattice $L \subseteq \ZZ^n$ is saturated if and only if
there exists an $(n-m)\times n$ integer matrix $A$ of full rank such
that $L=\ker_{\ZZ}(A)$.  In this case, if $\rho$ is the trivial
character, the ideal $I_{\rho}$ is denoted by $I_A$ and called a
\emph{toric ideal}.  Note that
\begin{equation}
\label{eq:IA}
I_A = \langle \ttt^u - \ttt^v \mid Au = Av \rangle.
\end{equation}
If $\rho$ is not the trivial character, then $I_\rho$ becomes  
isomorphic to~$I_A$ when the variables are rescaled via $\del_i
\mapsto \rho(e_i)\del_i$, which induces the rescaling $\ttt^u
\mapsto \rho(u)\ttt^u$ on general monomials.
\end{remark}

The characteristic zero part of the main result in
\cite{binomialideals}, Theorem~7.1$'$, says that an irredundant
primary decomposition of an arbitrary binomial ideal $I \subseteq
\CC[\ttt]$ is given by
\begin{equation}\label{eq:Hull}
  I = \bigcap_{I_{\rho,J}\in\mathrm{Ass}(I)}\mathrm{Hull}(I + I_\rho +
  \mm_J^e)
\end{equation}
for any large integer~$e$, where $\mathrm{Hull}$ means to discard the
primary components for embedded (i.e.\ nonminimal associated) primes,
and $\mm_J^e = \<\del_j \mid j \notin J\>^e$.  Our goal in this
section is to be explicit about the Hull operation.  The salient
feature of~(\ref{eq:Hull}) is that $I + I_\rho + \mm_J^e$
contains~$I_\rho$.  In contrast, (\ref{eq:Hull}) is false in positive
characteristic, where $I_\rho + \mm_J^e$ should be replaced by a
Frobenius power of~$I_{\rho,J}$ \cite[Theorem~7.1$'$]{binomialideals}.

Our notation in the next theorem is as follows.  Given a subset $J
\subseteq \{1,\ldots,n\}$, let $\oJ = \{1,\ldots,n\} \minus J$ be its
complement, and use these sets to index coordinate subspaces
of~$\NN^n$ and $\ZZ^n$; in particular, $\NN^n = \NN^J \times \NN^\oJ$.
Adjoining additive inverses for the elements in $\NN^J$ yields $\ZZ^J
\times \NN^\oJ$, whose semigroup ring we denote by
$\CC[\ttt][\ttt_J^{-1}]$, with $\ttt_J = \prod_{j \in J} \del_j$.  As in
Definition~\ref{d:U}, $\CC\{S\}$ is the monomial ideal in~$\CC[\ttt]$
having $\CC$-basis~$S$.  Finally, for a saturated sublattice $L
\subseteq \ZZ^J$, we write $\NN^J/L$ for the image of $\NN^J$ in the
torsion-free group~$\ZZ^J/L$.

\begin{theorem}\label{t:components}
Fix a binomial ideal $I \subseteq \CC[\ttt]$ and an associated prime
$I_{\rho,J}$ of~$I$, where $\rho : L \to \CC^*$ for a saturated
sublattice $L \subseteq \ZZ^J \subseteq \ZZ^n$.  Set $\Phi = \NN^J/L$,
and write $\sim$ for the congruence on $\ZZ^J \times \NN^\oJ$
determined by the ideal $(I+I_\rho)[\ZZ^J] =
(I+I_\rho)\CC[\ttt][\ttt_J^{-1}]$.
\begin{numbered}
\item
If $I_{\rho,J}$ is a minimal prime of~$I$ and $\wU$ is the set of $u
\in \NN^n$ whose congruence classes in $(\ZZ^J \times \NN^\oJ)/$$\sim$
have infinite image in~$\ZZ\Phi \times \NN^\oJ$, then the
$I_{\rho,J}$-primary component of~$I$~is
\[
  \cC_{\rho,J} = \big( (I + I_\rho) : \ttt_J^\infty\big) +
  \CC\big\{\wU\big\}.
\]
\end{numbered}\setcounter{separated}{\value{enumi}}%
Fix a monomial ideal $K \subseteq \CC[\del_j \mid j \in \oJ]$ containing
a power of each available variable, and let $\approx$ be the
congruence on $\ZZ^J \times \NN^\oJ$ determined by
$(I+I_\rho+K)\CC[\ttt][\ttt_J^{-1}]$.  Write $\wU_K$ for the set of $u
\in \NN^n$ whose congruence classes in $(\ZZ^J \times
\NN^\oJ)/$$\approx$ have infinite image in~$\ZZ\Phi \times \NN^\oJ$.
\begin{numbered}\setcounter{enumi}{\value{separated}}
\item
The $I_{\rho,J}$-primary component of $\<I + I_\rho + K\> \subseteq
\CC[\ttt]$ is $\big( (I + I_\rho + K) : \ttt_J^\infty \big)+\CC\{\wU_K\}$.
\item
If $K$ is contained in a sufficiently high power of\/ $\mm_J$, then
\[
  \cC_{\rho,J} = \big( (I + I_\rho + K) : \ttt_J^\infty\big) +
  \CC\big\{\wU_K\big\}
\]
is a valid choice of $I_{\rho,J}$-primary component for~$I$.
\end{numbered}
The only monomials in the above primary components are those in
$\CC\{\wU\}$ or $\CC\{\wU_K\}$.
\end{theorem}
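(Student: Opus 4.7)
The strategy is to reduce all three parts to Theorem~\ref{t:zerocomp} via the quotient $\CC[Q] = \CC[\ttt]/I_L$, where $L \subseteq \ZZ^J$ is the (saturated) domain of $\rho$. By Remark~\ref{rem:I_A}, after extending $\rho$ to a character $\tilde\rho : \ZZ^J \to \CC^*$ and rescaling $\del_j \mapsto \tilde\rho(e_j)^{-1}\del_j$ for $j \in J$, we may assume $\rho$ is trivial and $I_\rho = I_L$; this monomial-preserving automorphism of $\CC[\ttt]$ fixes $\mm_J$ and neither alters $\wU$ nor $\wU_K$. Set $Q = \NN^n/L = \Phi \times \NN^\oJ$ with $\Phi = \NN^J/L$, and identify $\CC[Q] = \CC[\ttt]/I_L$, so that $I_{\rho,J}$ corresponds to the face prime $\pp_\Phi$. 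Since the image of $\sum_{j\in J}e_j$ is interior to $\Phi$, inverting $\ttt_J$ matches inverting $\ttt^\Phi$, giving $\CC[\ttt][\ttt_J^{-1}]/I_L[\ZZ^J] \cong \CC[\ZZ\Phi\times\NN^\oJ] = \CC[Q+\ZZ\Phi]$. Writing $\bar I$ for the image of $I$ in $\CC[Q]$, the projection $\ZZ^J\times\NN^\oJ \twoheadrightarrow \ZZ\Phi\times\NN^\oJ$ sends each class of $\sim$ (determined by $(I+I_L)[\ZZ^J]$) onto a class of $\sim_{\bar I[\ZZ\Phi]}$, and this image is infinite exactly when the corresponding class in $\ZZ\Phi\times\NN^\oJ$ is infinite; hence $\wU$ is the preimage in $\NN^n$ of the unbounded ideal $U\subseteq Q$ of Definition~\ref{d:U}.

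For Part~1, the minimality of $I_{\rho,J}$ over $I$ translates to the minimality of $\pp_\Phi$ over $\bar I$, and Theorem~\ref{t:zerocomp} produces $(\bar I:\ttt^\Phi)+\CC\{U\}$ as the $\pp_\Phi$-primary component of $\bar I$. This pulls back along $\CC[\ttt]\twoheadrightarrow\CC[Q]$ to $((I+I_L):\ttt_J^\infty)+\CC\{\wU\}$, which after undoing the rescaling is $\cC_{\rho,J}$. The final sentence about monomials transfers directly from Theorem~\ref{t:zerocomp}: since $I_L$ contains no monomials (it is a prime ideal whose generators are binomials with disjoint support), a monomial $\ttt^u\in\CC[\ttt]$ lies in $\cC_{\rho,J}$ precisely when its image in $\CC[Q]$ lies in $\CC\{U\}$, i.e., precisely when $u\in\wU$.

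For Part~2, the ideal $I'=I+I_\rho+K$ has $I_{\rho,J}$ as its unique minimal prime: any prime containing $I'$ contains $\sqrt{I_\rho+K}=I_\rho+\mm_J=I_{\rho,J}$, and $I_{\rho,J}$ itself is such a prime. Part~1 applied to $I'$ (with $\approx$ in place of $\sim$) then yields the stated formula. For Part~3, invoke~(\ref{eq:Hull}) to fix $e$ large enough that the $I_{\rho,J}$-primary component of $I+I_\rho+\mm_J^e$ equals $\cC_{\rho,J}$, and take any $K\subseteq\mm_J^e$ containing a power of each $\del_j$ with $j\in\oJ$. Since the $I_{\rho,J}$-primary component of any ideal $N\subseteq\CC[\ttt]$ is the kernel of $\CC[\ttt]\to(\CC[\ttt]/N)_{I_{\rho,J}}$ and is therefore monotone in $N$, the chain of inclusions
\[
I \;\subseteq\; I+I_\rho+K \;\subseteq\; I+I_\rho+\mm_J^e
\]
squeezes the $I_{\rho,J}$-primary component of $I+I_\rho+K$ between two copies of $\cC_{\rho,J}$, forcing equality with $\cC_{\rho,J}$. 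Part~2 applied to $I+I_\rho+K$ then identifies this primary component with $((I+I_\rho+K):\ttt_J^\infty)+\CC\{\wU_K\}$, completing Part~3. The principal obstacle, to my mind, is this final squeezing step: ensuring that Part~2 can be invoked with arbitrary small $K$ and yet recover $\cC_{\rho,J}$ relies crucially on the Eisenbud--Sturmfels stabilization in~(\ref{eq:Hull}). The bookkeeping between congruences on $\ZZ^J\times\NN^\oJ$ and on $\ZZ\Phi\times\NN^\oJ$, by contrast, is largely formal once the identification of $\ttt_J^{-1}$-localization with $\ttt^\Phi$-localization is in place.
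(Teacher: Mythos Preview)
Your handling of Parts~1 and~2 is correct and coincides with the paper's argument: trivialize~$\rho$ by rescaling, pass to the affine semigroup ring $\CC[Q]$ with $Q = \Phi \times \NN^{\oJ}$, identify $\wU$ with the preimage of the unbounded ideal~$U$, and invoke Theorem~\ref{t:zerocomp}. The paper disposes of Parts~2 and~3 in a single sentence, so your added justification that $I_{\rho,J}$ is the unique minimal prime of $I+I_\rho+K$ is a welcome clarification.

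The gap is in Part~3. Your squeezing argument rests on the claim that the $I_{\rho,J}$-primary component of an ideal~$N$ equals the kernel of $\CC[\ttt] \to (\CC[\ttt]/N)_{I_{\rho,J}}$, but this is true only when $I_{\rho,J}$ is \emph{minimal} over~$N$. When $I_{\rho,J}$ is an embedded prime of~$I$---precisely the case Part~3 is meant to address---that localization kernel is the intersection of the primary components for all associated primes of~$I$ contained in~$I_{\rho,J}$, which is generally strictly smaller than any valid $I_{\rho,J}$-primary component. So the lower end of your chain does not furnish a copy of~$\cC_{\rho,J}$, and the squeeze collapses. In fact the equality you are aiming for, namely that the $I_{\rho,J}$-primary component $P_K$ of $I+I_\rho+K$ coincides with $P_{\mm_J^e}$, is stronger than what Part~3 asserts and need not hold: as $K$ shrinks inside~$\mm_J^e$, the ideal $P_K$ can shrink as well.

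Part~3 only asks that $P_K$ be \emph{some} valid $I_{\rho,J}$-primary component of~$I$, and for this the upper half of your argument already suffices. From $K \subseteq \mm_J^e$ and monotonicity of the localization kernel (legitimate here, since $I_{\rho,J}$ is minimal over both $I+I_\rho+K$ and $I+I_\rho+\mm_J^e$) you get $P_K \subseteq P_{\mm_J^e}$, and trivially $I \subseteq P_K$. Replacing $P_{\mm_J^e}$ by~$P_K$ in the irredundant decomposition~(\ref{eq:Hull}) therefore still intersects to~$I$, and the radicals remain exactly $\mathrm{Ass}(I)$. But a primary decomposition whose radicals are distinct and equal to $\mathrm{Ass}(I)$ is automatically irredundant (else one could delete a component and contradict the first uniqueness theorem), so $P_K$ is a valid choice of $I_{\rho,J}$-primary component. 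Combined with your Part~2, this repairs the argument.
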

\begin{proof}
First suppose $I_{\rho,J}$ is a minimal prime of~$I$.  We may, by
rescaling the variables $\del_j$ for $j \in J$, harmlessly assume that
$\rho$ is the trivial character on its lattice~$L$, so that $I_\rho =
I_L$ is the lattice ideal for~$L$.  The quotient $\CC[\ttt]/I_L$ is
the affine semigroup ring~$\CC[Q]$ for $Q = (\NN^J/L) \times \NN^\oJ =
\Phi \times \NN^\oJ$.  Now let us take the whole situation
modulo~$I_L$.  The image of~$I_{\rho,J} = I_L + \mm_J$ is the prime
ideal $\pp_\Phi \subseteq \kk[Q]$ for the face~$\Phi$.  The image in
$\CC[Q]$ of the binomial ideal~$I$ is a binomial ideal~$I'$, and
$\big( (I + I_L) : \ttt_J^\infty \big)$ has image $(I' : \ttt^\Phi)$,
as defined in~(\ref{eq:I:Phi}).  Finally, the image of $\wU$ in~$Q$ is
the unbounded ideal $U \subseteq Q$ (Definition~\ref{d:U}) by
construction.  Now we can apply Theorem~\ref{t:zerocomp} to~$I'$ and
obtain a combinatorial description of the component associated to
$I_{\rho,J}$.

The second and third items follow from the first by replacing $I$ with
$I+K$, given the primary decomposition in~(\ref{eq:Hull}).
\end{proof}

\begin{remark}
One of the mysteries in \cite{binomialideals} is why the primary
components~$\cC$ of binomial ideals turn out to be generated by
monomials and binomials.  {}From the perspective of
Theorem~\ref{t:components} and Proposition~\ref{p:graded} together,
this is because the primary components are \emph{finely graded}: under
some grading by a free abelian group, namely $\ZZ\Phi$, the vector
space dimensions of the graded pieces of the quotient modulo the
ideal~$\cC$ are all $0$ or~$1$
\cite[Proposition~1.11]{binomialideals}.  In fact, via
Lemma~\ref{l:distinct}, fine gradation is the root cause of
primaryness.
\end{remark}

\begin{remark}
Theorem~\ref{t:components} easily generalizes to arbitrary binomial
ideals in arbitrary commutative noetherian semigroup rings over~$\CC$:
simply choose a presentation as a quotient of a polynomial ring modulo
a pure difference binomial ideal \cite[Theorem~7.11]{gilmer}.
\end{remark}

\begin{remark}
The methods of Section~\ref{s:semigroup} work in arbitrary
characteristic---and indeed, over a field $\kk$ that can fail to be
algebraically closed, and can even be finite---because we assumed that
a prime ideal $\pp_\Phi$ for a face~$\Phi$ is associated to our
binomial ideal.  In contrast, this section and the next work only over
an algebraically closed field of characteristic zero.  However, it
might be possible to produce similarly explicit binomial primary
decompositions in positive characteristic by reducing to the situation
in Section~\ref{s:semigroup}; this remains an open problem.
\end{remark}

%%%%%%%%%%%%%%%%%%%%%%%%%%%%%%%%%%%%%%%%%%%%%%%%%%%%%%%%%%%%%%%%%%%%%%%%%
\section{Associated components and multigradings}%%%%%%%%%%%%%%%%%%%%%%%%
%%%%%%%%%%%%%%%%%%%%%%%%%%%%%%%%%%%%%%%%%%%%%%%%%%%%%%%%%%%%%%%%%%%%%%%%%
\label{s:toral}

In this section, we turn our attention to interactions of primary
components with various gradings on~$\CC[\ttt]$.  These played crucial
roles already in the proof of Theorem~\ref{t:components}: taking the
quotient of its statement by the toric ideal~$I_\rho$ put us in the
situation of Proposition~\ref{p:graded} and Theorem~\ref{t:zerocomp},
which provide excellent control over gradings.  The methods here can
be viewed as aids for clarification in examples, as we shall see in
the case of lattice basis ideals (Example~\ref{ex:I(B)}).  However,
this theory was developed with applications in mind \cite{dmm}; see
Section~\ref{s:applications}.

Generally speaking, given a grading of~$\CC[\ttt]$, there are two
kinds of graded modules: those with bounded Hilbert function (the
\emph{toral} case below) and those without.  The main point is
Theorem~\ref{t:toral}: if $\CC[\ttt]/\pp$ has bounded Hilbert function
for some graded prime~$\pp$, then the $\pp$-primary component of any
graded binomial ideal is easier to describe than usual.

To be consistent with notation, we adopt the following conventions for
this section.

\begin{convention}\label{conv:A}
$A = (a_{ij}) \in \ZZ^{d \times n}$ denotes an integer $d \times n$
matrix of rank~$d$ whose columns $a_1,\ldots,a_n$ all lie in a single
open linear half-space of~$\RR^d$; equivalently, the cone generated by
the columns of $A$ is pointed (contains no lines), and all of the
columns~$a_i$ are nonzero.  We also assume that $\ZZ A = \ZZ^d$; that
is, the columns of $A$ span $\ZZ^d$ as a lattice.
\end{convention}

\begin{convention}\label{conv:B}
Let $B = (b_{jk})\in \ZZ^{n\times m}$ be an integer matrix of full
rank~$m \leq n$.  Assume that every nonzero element of the column span
$\ZZ B$ of~$B$ over the integers~$\ZZ$ is \emph{mixed}, meaning that
it has at least one positive and one negative entry; in particular,
the columns of $B$ are mixed.  We write $b_1,\ldots,b_n$ for the rows
of~$B$.  Having chosen~$B$, we set $d = n - m$ and pick a matrix $A
\in \ZZ^{d \times n}$ such that $AB = 0$ and $\ZZ A = \ZZ^d$.  

If $d\neq 0$, the mixedness hypothesis on $B$ is equivalent to
the pointedness assumption for~$A$ in Convention~\ref{conv:A}.  We do
allow~\mbox{$d=0$}, in which case $A$ is the empty matrix.
\end{convention}

The $d \times n$ integer matrix~$A$ in Convention~\ref{conv:A}
determines a $\ZZ^d$-grading on $\CC[\ttt]$ in which the degree
\mbox{$\deg(\del_j) = a_j$} is defined%
	\footnote{In noncommutative settings, such as \cite{MMW,dmm},
	the variables are written $\ddel_1,\ldots,\ddel_n$, and the
	degree of $\ddel_j$ is usually defined to be $-a_j$ instead
	of~$a_j$.}
to be the $j^\th$ column of~$A$.  Our conventions imply that
$\CC[\ttt]$ has finite-dimensional graded pieces, like any finitely
generated module \mbox{\cite[Chapter~8]{cca}}.

\begin{defn}\label{d:toral}
Let $V = \bigoplus_{\alpha \in \ZZ^d} V_\alpha$ be an $A$-graded
module over the polynomial ring~$\CC[\ttt]$.  The \emph{Hilbert
function} $H_V: \ZZ^d \to \NN$ takes the values $H_V(\alpha) =
\dim_\CC{V_\alpha}$.  If $V$ is finitely generated, we say that the module
$V$ is \emph{toral}\/ if the Hilbert function $H_V$ is bounded above.
A graded prime~$\pp$ is a \emph{toral prime} if $\CC[\ttt]/\pp$ is a
toral module.  Similarly, a graded primary component $J$ of an
ideal~$I$ is a \emph{toral component} of~$I$ if $\CC[\ttt]/J$ is a
toral module.
\end{defn}

\begin{example}
The toric ideal $I_A$ for the grading matrix~$A$ is always an
$A$-graded toral prime, since the quotient $\CC[\ttt]/I_A$ is always
toral: its Hilbert function takes only the values $0$ or~$1$.  In
contrast, $\CC[\ttt]$ itself is not a toral module unless $d = n$
(which forces $A$ to be invertible over~$\ZZ$, by
Convention~\ref{conv:A}).
\end{example}

We will be most interested in the quotients of $\CC[\ttt]$ by prime
and primary binomial ideals.  To begin, here is a connection between
the natural gradings from Section~\ref{s:semigroup} and the
$A$-grading.

\begin{lemma}\label{l:toral}
Let $I \subseteq \CC[\ttt]$ be an $A$-graded binomial ideal and
$\cC_{\rho,J}$ a primary component, with $\rho : L \to \CC^*$ for $L
\subseteq \ZZ^J$.  The image $\ZZ A_J$ of the homomorphism $\ZZ^J/L =
\ZZ\Phi \to \ZZ^d$ induced by Corollary~\ref{c:graded} (with $\cA =
\ZZ A = \ZZ^d$) is generated by the columns $a_j$ of~$A$ indexed by~$j
\in J$, as is the monoid image of\/ $\Phi = \NN^J/L$, which we denote
by~$\NN A_J$.\qed
\end{lemma}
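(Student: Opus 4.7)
The plan is to unpack the homomorphism $\ZZ\Phi \to \cA$ produced by Corollary~\ref{c:graded} and identify it concretely with the linear map induced by the matrix~$A$ after reduction modulo~$L$.

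First I would reduce to the setting of Section~\ref{s:semigroup}, exactly as in the proof of Theorem~\ref{t:components}: after rescaling the variables $\del_j$ for $j \in J$ we may assume that $\rho$ is trivial, so $I_\rho = I_L$ and $\CC[\ttt]/I_L = \CC[Q]$ with $Q = \Phi \times \NN^\oJ$. (The rescaling does not affect $A$-degrees, since both monomials in any generator $\ttt^{u_+} - \rho(u)\ttt^{u_-}$ of $I_\rho$ already have equal $A$-degree.) Because $I$ is $A$-graded, so is its associated prime $I_\rho + \mm_J$; hence the binomial generators of $I_\rho$ are $A$-homogeneous, which forces $Au_+ = Au_-$, i.e., $L \subseteq \ker_\ZZ(A) \cap \ZZ^J$.

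The inclusion $L \subseteq \ker_\ZZ(A) \cap \ZZ^J$ means that the homomorphism $\ZZ^J \to \ZZ^d$ sending $e_j \mapsto a_j$ factors through the quotient $\ZZ^J/L = \ZZ\Phi$, and similarly the monoid morphism $\NN^J \to \NN^d$ factors through $\Phi = \NN^J/L$. Extending by $e_j \mapsto a_j$ for $j \in \oJ$ gives a monoid morphism $Q = \Phi \times \NN^\oJ \to \NN^d$ which is exactly the one defining the $A$-grading on $\CC[Q]$. Thus the morphism $Q \to \cA$ of Corollary~\ref{c:graded} is the one just described, and the induced group homomorphism $\ZZ\Phi \to \cA$ is obtained (by the universal property of the monoid localization $Q + \ZZ\Phi$) as the restriction of $A$ to $\ZZ\Phi$.

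Reading off the image is then immediate: as a group, $\ZZ\Phi$ is generated by $\{e_j + L \mid j \in J\}$, which maps to $\{a_j \mid j \in J\}$ in $\ZZ^d$, and these generate $\ZZ A_J$ by definition. Likewise, $\Phi$ is generated as a monoid by $\{e_j + L \mid j \in J\}$, with image generating $\NN A_J$. Since the argument is essentially bookkeeping, there is no substantive obstacle; the one subtle point is verifying that the abstract map produced by Corollary~\ref{c:graded} genuinely agrees with the restriction of~$A$, which reduces to checking that the $A$-grading on $\CC[Q]$ is the one coming from the monoid morphism $Q \to \NN^d$ above.
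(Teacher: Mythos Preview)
Your proof is correct. The paper gives no proof of this lemma---it is marked with \qed\ immediately after the statement---but the paragraph following it offers essentially the same explanation you do: the $A$-gradedness of $I_{\rho,J}$ forces $L \subseteq \ker(\ZZ^n \to \ZZ^d)$, so the map $\ZZ\Phi = \ZZ^J/L \to \ZZ^d$ is just the one induced by~$A$, from which the claim about images is immediate.
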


To make things a little more concrete, let us give one more
perspective on the homomorphism $\ZZ\Phi \to \ZZ^d$.  Simply put, the
ideal $I_{\rho,J}$ is naturally graded by $\ZZ^J/L = \ZZ\Phi$, and the
fact that it is also $A$-graded means that $L \subseteq \ker(\ZZ^n \to
\ZZ^d)$, the map to $\ZZ^d$ being given by~$A$.  (The real content of
Corollary~\ref{c:graded} lies with the action on the rest of~$\cB$.)

\begin{example}
Let $\rho : L \to \CC^*$ for a saturated sublattice $L \subseteq \ZZ^J
\subseteq \ZZ^n$.  If $\cC_{\rho,J}$ is an $I_{\rho,J}$-primary
binomial ideal, then $\CC[\ttt]/\cC_{\rho,J}$ has a finite filtration
whose successive quotients are torsion-free modules of rank~$1$ over
the affine semigroup ring $R = \CC[\ttt]/I_{\rho,J}$.  This follows by
applying Proposition~\ref{p:graded} to Theorem~\ref{t:components}.1
and its proof.  If, in addition, $I_{\rho,J}$ is $A$-graded, then some
$A$-graded translate of each successive quotient admits a
$\ZZ^J/L$-grading refining the $A$-grading via $\ZZ^J/L \to \ZZ^d =
\ZZ A$; this follows by conjointly applying Corollary~\ref{c:graded}.
\end{example}

The next three results provide alternate characterizations of toral
primary binomial ideals.  In what follows, $A_J$ is the submatrix
of~$A$ on the columns indexed~by~$J$.

\begin{prop}\label{p:toral}
Every $A$-graded toral prime is binomial.  In the situation of
Lemma~\ref{l:toral}, $\CC[\ttt]/I_{\rho,J}$ and
$\CC[\ttt]/\cC_{\rho,J}$ are toral if and only if the homomorphism
$\ZZ\Phi \to \ZZ^d$ is injective.
\end{prop}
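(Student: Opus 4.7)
The plan is to prove the proposition in three steps: (a)~toral primes are binomial; (b)~$\CC[\ttt]/I_{\rho,J}$ is toral iff $\ZZ\Phi\to\ZZ^d$ is injective; (c)~the same criterion governs $\CC[\ttt]/\cC_{\rho,J}$.

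For~(a), let $\pp$ be $A$-graded with $R=\CC[\ttt]/\pp$ toral. Because $R$ is a $\ZZ^d$-graded domain, its graded fraction field $K$ inherits a $\ZZ^d$-grading (invert nonzero homogeneous elements), and I claim $R$ is toral iff $K_0=\CC$. If $K_0=\CC$, any two nonzero elements of $K_\alpha$ are $\CC$-proportional, forcing $\dim R_\alpha\leq \dim K_\alpha\leq 1$; conversely, if $f\in K_0\setminus\CC$ and $f=g/h$ with $g,h\in R_\beta$ nonzero, then the $N+1$ elements $g^k h^{N-k}=h^N f^k$ for $0\leq k\leq N$ are linearly independent in $R_{N\beta}$ since $f$ is transcendental over~$\CC$, so $H_R$ is unbounded. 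But $K_0=\CC$ says the torus $T=\mathrm{Hom}(\ZZ^d,\CC^*)$ has no nonconstant rational invariants on $V(\pp)$, so $V(\pp)$ is the closure of a single $T$-orbit. Such an orbit closure, supported on a coordinate stratum indexed by some $J\subseteq\{1,\ldots,n\}$ and cut out in $(\CC^*)^J$ by a character $\rho$ on a saturated sublattice $L\subseteq \ker(A|_{\ZZ^J})$, is precisely $V(I_{\rho,J})$; hence $\pp=I_{\rho,J}$ is binomial.

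For~(b), after rescaling the $\del_j$ with $j\in J$ I may assume $\rho$ is trivial, identifying $\CC[\ttt]/I_{\rho,J}$ with $\CC[\Phi]$, whose $A$-graded piece of degree~$\alpha$ is spanned by the preimages of~$\alpha$ under the map $\Phi\to \NN A_J\hookrightarrow\ZZ^d$ of Lemma~\ref{l:toral}. Injectivity of $\ZZ\Phi\to\ZZ^d$ makes each fiber a singleton, giving $H\leq 1$. Conversely, given nonzero $v\in\ker(\ZZ\Phi\to\ZZ^d)$, write $v=u_1-u_2$ with $u_1\neq u_2\in\Phi$ (possible since $\Phi$ generates $\ZZ\Phi$); then the $N+1$ elements $a u_1+b u_2$ with $a+b=N$ are pairwise distinct in $\Phi$ by torsion-freeness of $\ZZ^J/L$ and all share $A$-degree $N\cdot Au_1$, so $H(N\cdot Au_1)\geq N+1$. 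For~(c), the surjection $\CC[\ttt]/\cC_{\rho,J}\twoheadrightarrow \CC[\ttt]/I_{\rho,J}$ supplies the forward direction. For the converse, assume $\CC[\Phi]$ is toral; Proposition~\ref{p:graded}.2 and~\ref{p:graded}.4, combined with Corollary~\ref{c:graded}, furnish $\CC[\ttt]/\cC_{\rho,J}$ with a finite $A$-graded filtration whose associated graded summands $(\CC[\ttt]/\cC_{\rho,J})_T$ are $A$-graded, finitely generated, rank-one torsion-free $\CC[\Phi]$-modules. Choosing a homogeneous generator over~$K$ embeds each such summand $A$-gradedly as a degree shift of the graded fraction field $K$ of $\CC[\Phi]$, whose weight spaces are at most one-dimensional by~(a); thus each summand, and hence $\CC[\ttt]/\cC_{\rho,J}$, has bounded Hilbert function.

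The principal obstacle is step~(a): translating $K_0=\CC$ into the geometric statement that $V(\pp)$ is a toric orbit closure, and then pinning down its defining ideal explicitly as $I_{\rho,J}$ for a saturated partial character on a sublattice of $\ker(A|_{\ZZ^J})$. Once $\pp$ is known to have this combinatorial form, steps~(b) and~(c) are essentially bookkeeping with the tools already developed in Section~\ref{s:semigroup}.
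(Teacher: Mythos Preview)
Your proof is correct, and for step~(b) it essentially matches the paper's argument. Steps~(a) and~(c), however, take different routes.

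For~(a), both you and the paper reduce to showing $K_0=\CC$ for the graded fraction field~$K$. You then invoke the geometric fact that trivial rational invariants force a dense torus orbit, and identify the orbit closure as $V(I_{\rho,J})$. The paper instead stays algebraic: once $K_0=\CC$, every graded piece of~$K$ is one-dimensional, hence the Hilbert function of $\CC[\ttt]/\pp$ is bounded by~$1$, and \cite[Proposition~1.11]{binomialideals} immediately gives that $\pp$ is binomial. Your approach buys a self-contained geometric picture; the paper's buys brevity and avoids appealing to Rosenlicht-type results.

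For~(c), you use the specific filtration of Proposition~\ref{p:graded}.4 and embed each rank-one summand into a shift of~$K$. This works, but you should justify that the filtration is \emph{finite}: Proposition~\ref{p:graded}.4 indexes it by the possibly infinite set~$\cB/\Phi$, and finiteness holds here only because $\cC_{\rho,J}$ arises as in Theorem~\ref{t:components} (so Proposition~\ref{p:graded}.5 applies). The paper sidesteps this by using an ordinary prime filtration of $\CC[\ttt]/\cC_{\rho,J}$: every prime appearing contains~$I_{\rho,J}$, and any prime containing a toral prime is itself toral (its Hilbert function is dominated). This is shorter and needs nothing about the particular shape of~$\cC_{\rho,J}$.
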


\begin{proof}
To prove the first part of the statement, fix a toral prime~$\pp$, and
let $h \in \NN$ be the maximum of the Hilbert function
of~$\CC[\ttt]/\pp$.  It is enough, by
\cite[Proposition~1.11]{binomialideals}, to show that $h = 1$.
Let~$R$ be the localization of~$\CC[\ttt]/\pp$ by inverting all
nonzero homogeneous elements.  Because of the homogeneous units
in~$R$, all of its graded pieces have the same dimension over~$\CC$;
and since $R$ is a domain, this dimension is at least~$h$.  Thus we
need only show that $R_0 = \CC$.  For any given finite-dimensional
subspace of~$R_0$, multiplication by a common denominator maps it
injectively to some graded piece of $\CC[\ttt]/\pp$.  Therefore every
finite-dimensional subspace of~$R_0$ has dimension at most~$h$.  It
follows that $H_R(0) \leq h$, so $R_0$ is artinian.  But $R_0$ is a
domain because $R_0 \subseteq R$, so $R_0 = \CC$.

For the second part, $\CC[\ttt]/\cC_{\rho,J}$ has a finite filtration
whose associated graded pieces are $A$-graded translates of quotients
of $\CC[\ttt]$ by $A$-graded primes, at least one of which is
$I_{\rho,J}$ and all of which contain~it.  By additivity of Hilbert
functions, $\CC[\ttt]/\cC_{\rho,J}$ is toral precisely when all of
these are toral primes.  However, if a graded prime~$\pp$ contains a
toral prime, then $\pp$ is itself a toral prime.  Therefore, we need
only treat the case of $\CC[\ttt]/I_{\rho,J}$.  But
$\CC[\ttt]/I_{\rho,J}$ is naturally graded by~$\ZZ\Phi$, with Hilbert
function $0$ or~$1$, so injectivity immediately implies that
$\CC[\ttt]/I_{\rho,J}$ is toral.  On the other hand, if $\ZZ\Phi \to
\ZZ^d$ is not injective, then $\NN A_J$ is a proper quotient of the
affine semigroup~$\Phi$, and such a proper quotient has fibers of
arbitrary cardinality.
\end{proof}

\begin{cor}\label{c:toralprimes}
Let $\rho : L \to \CC^*$ for a saturated lattice $L \subseteq \ZZ^J
\cap \ker_{\ZZ}(A) = \ker_{\ZZ}(A_J)$.  The quotient\/
$\CC[\ttt]/I_{\rho,J}$ by an $A$-graded prime~$I_{\rho,J}$
is toral if and only if $L = \ker_{\ZZ}(A_J)$.\qed
\end{cor}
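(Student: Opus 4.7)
The plan is to deduce this directly from Proposition~\ref{p:toral} by identifying the kernel of the group homomorphism $\ZZ\Phi \to \ZZ^d$ in terms of the given data. By Proposition~\ref{p:toral}, $\CC[\ttt]/I_{\rho,J}$ is toral if and only if the map $\ZZ\Phi \to \ZZ^d$ from Corollary~\ref{c:graded} (with $\cA = \ZZ A = \ZZ^d$) is injective, where $\Phi = \NN^J/L$ and therefore $\ZZ\Phi = \ZZ^J/L$.

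First I would unwind the definition of the map: by Lemma~\ref{l:toral}, the homomorphism $\ZZ\Phi \to \ZZ^d$ is precisely the one induced on the quotient $\ZZ^J/L$ by the restriction $A_J : \ZZ^J \to \ZZ^d$ of~$A$ to the coordinate sublattice~$\ZZ^J$. The hypothesis $L \subseteq \ker_{\ZZ}(A_J)$ guarantees that this induced map is well-defined. Then the kernel of $\ZZ^J/L \to \ZZ^d$ is exactly $\ker_{\ZZ}(A_J)/L$.

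The equivalence now becomes tautological: the quotient $\ker_{\ZZ}(A_J)/L$ is trivial if and only if $L = \ker_{\ZZ}(A_J)$, which by Proposition~\ref{p:toral} is equivalent to torality of $\CC[\ttt]/I_{\rho,J}$. There is no real obstacle here; the only thing worth double-checking is that the map supplied by Corollary~\ref{c:graded} genuinely coincides with the one induced by~$A_J$, which is the content of Lemma~\ref{l:toral} (the monoid image of $\Phi$ under the $A$-grading is $\NN A_J$, generated by the columns $a_j$ for $j \in J$).
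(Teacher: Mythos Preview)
Your proof is correct and is exactly the intended argument: the paper marks this corollary with a \qed\ and gives no separate proof, treating it as immediate from Proposition~\ref{p:toral} together with the identification in Lemma~\ref{l:toral}. Your unwinding of the kernel as $\ker_\ZZ(A_J)/L$ is precisely what makes the statement an immediate corollary.
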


\begin{lemma}\label{l:toral-by-A}
Every $A$-graded binomial prime ideal $I_{\rho,J}$ satisfies
\[
\dim(I_{\rho,J}) \geq \rank(A_J),
\]
with equality if and only if\/ $\CC[\ttt]/I_{\rho,J}$ is toral.
\end{lemma}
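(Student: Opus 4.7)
The plan is to compute the dimension of $\CC[\ttt]/I_{\rho,J}$ directly as an affine semigroup ring, then use the hypothesis of $A$-gradedness to force a containment of lattices which yields the inequality, and finally identify the equality case via Corollary~\ref{c:toralprimes}.

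First I would dispose of the scalars in $\rho$ by the variable rescaling $\del_j \mapsto \rho(e_j)\del_j$ of Remark~\ref{rem:I_A}, which replaces $I_\rho$ by the lattice ideal~$I_L$ without altering the Krull dimension, the rank of~$A_J$, or the property of being toral (the rescaling is $A$-graded). Then $I_{\rho,J} = I_L + \mm_J$ with $L \subseteq \ZZ^J$ saturated, and since the generators of $I_L$ involve only the variables $\del_j$ for $j \in J$ while $\mm_J$ kills all other variables,
\[
  \CC[\ttt]/I_{\rho,J} \;\cong\; \CC[\del_j \mid j\in J]/I_L \;=\; \CC[\NN^J/L].
\]
This is an affine semigroup ring whose underlying monoid has torsion-free Grothendieck group $\ZZ^J/L$, so its Krull dimension equals $\rank(\ZZ^J/L) = |J| - \rank(L)$.

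Next I would extract the content of the $A$-grading hypothesis. Homogeneity of the generators $\ttt^{u_+} - \ttt^{u_-}$ of $I_L$ under the degree map $\ZZ^n \to \ZZ^d$ defined by~$A$ forces $A u_+ = A u_-$, i.e.\ $Au = 0$, for every $u \in L$. Since $L \subseteq \ZZ^J$, this is exactly the statement $L \subseteq \ker_\ZZ(A) \cap \ZZ^J = \ker_\ZZ(A_J)$. By rank-nullity applied to $A_J: \ZZ^J \to \ZZ^d$, we have $\rank(\ker_\ZZ(A_J)) = |J| - \rank(A_J)$, so
\[
  \dim(I_{\rho,J}) \;=\; |J| - \rank(L) \;\geq\; |J| - \bigl(|J| - \rank(A_J)\bigr) \;=\; \rank(A_J),
\]
which is the claimed inequality.

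Finally, equality holds precisely when $\rank(L) = \rank(\ker_\ZZ(A_J))$. Here I would invoke saturation: $\ker_\ZZ(A_J) = (\QQ\otimes_\ZZ \ker_\ZZ(A_J)) \cap \ZZ^J$ is automatically saturated, and $L$ is saturated by hypothesis, so equality of ranks forces $\QQ L = \QQ\ker_\ZZ(A_J)$ and hence $L = \ker_\ZZ(A_J)$. By Corollary~\ref{c:toralprimes}, this condition is equivalent to $\CC[\ttt]/I_{\rho,J}$ being toral, completing the equivalence. There is no real obstacle here: once the dimension is identified as $|J|-\rank(L)$ and the $A$-graded hypothesis is unpacked as $L \subseteq \ker_\ZZ(A_J)$, the argument is a clean comparison of ranks between two saturated sublattices of~$\ZZ^J$.
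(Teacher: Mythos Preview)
Your proof is correct and follows essentially the same line as the paper's: rescale to reduce $I_\rho$ to a lattice ideal $I_L$ with $L$ saturated and contained in $\ker_\ZZ(A_J)$, compute $\dim(I_{\rho,J}) = |J| - \rank(L)$, and compare ranks. Your version is more explicit than the paper's terse argument, in particular spelling out why equality of ranks between the two saturated sublattices forces $L = \ker_\ZZ(A_J)$ before invoking Corollary~\ref{c:toralprimes}.
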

\begin{proof}
Rescale the variables and assume that $I_{\rho,J} = I_L$, the lattice
ideal for a saturated lattice $L \subseteq \ker_{\ZZ}(A_J)$.  The rank
of $L$ is at most $\# J - \rank(A_J)$; thus $\dim(I_L) = \# J -
\rank(L) \geq \rank(A_J)$.  Equality holds exactly when
$L=\ker_{\ZZ}(A)$, i.e.\ when $\CC[\ttt]/I_{\rho,J}$ is toral.
\end{proof}

\begin{example}\label{ex:I(B)}
Fix matrices $A$ and~$B$ as in Convention~\ref{conv:B}.  This
identifies $\ZZ^d$ with the quotient of\/ $\ZZ^n/\ZZ B$ modulo its
torsion subgroup.  Consider the \emph{lattice basis ideal}\/
\begin{equation}\label{eq:I(B)}
  I(B) = \<\ttt^{u_+} - \ttt^{u_-} \mid u = u_+ - u_- \;\mbox{is a
  column of}\; B\> \subseteq \CC[\del_1,\dots ,\del_n].
\end{equation}
The toric ideal~$I_A$ from~(\ref{eq:IA}) is an associated prime
of~$I(B)$, the primary component being $I_A$ itself.  More generally,
all of the minimal primes of the lattice ideal~$I_{\ZZ B}$, one of
which is~$I_A$, are minimal over~$I(B)$ with multiplicity~$1$; this
follows from \cite[Theorem~2.1]{binomialideals} by inverting the
variables.  That result also implies that the minimal primes
of~$I_{\ZZ B}$ are precisely the ideals~$I_\rho$ for partial
characters $\rho: \sat(\ZZ B) \to \CC^*$ of~$\ZZ^n$ extending the
trivial partial character on~$\ZZ B$, so the lattice ideal~$I_{\ZZ B}$
is the intersection of these prime ideals.  Hence $I_{\ZZ B}$ is a
radical ideal, and every irreducible component of its zero set is
isomorphic, as a subvariety of $\CC^n$, to the variety of~$I_A$.

In complete generality, each of the minimal primes of~$I(B)$ arises,
after row and column permutations, from a block decomposition of~$B$
of the form
\begin{equation}\label{eq:MNOB}
  \left[
    \begin{array}{l|r}
    N & B_J\!\\\hline
    M & 0\ 
    \end{array}
  \right],
\end{equation}
where $M$ is a mixed submatrix of~$B$ of size $q \times p$ for some $0
\leq q \leq p \leq m$ \cite{hostenshapiro}.  (Matrices with $q = 0$
rows are automatically mixed; matrices with $q = 1$ row are never
mixed.)  We note that not all such decompositions correspond to 
minimal primes: the matrix $M$ has to satisfy another condition which
Ho\c{s}ten and Shapiro call irreducibility \cite[Definition~2.2 and
Theorem~2.5]{hostenshapiro}.  If $I(B)$ is a complete intersection,
then only square matrices $M$ will appear in the block
decompositions~(\ref{eq:MNOB}), by a result of Fischer and Shapiro
\cite{fischer-shapiro}.

For each partial character $\rho : \sat(\ZZ B_J) \to \CC^*$ extending
the trivial character on~$\ZZ B_J$, the prime $I_{\rho,J}$ is
associated to~$I(B)$, where $J = J(M) = \{1,\ldots,n\} \minus {\rm
rows}(M)$ indexes the $n-q$ rows not in~$M$.  We reiterate that the
symbol $\rho$ here includes the specification of the sublattice
$\sat(\ZZ B_J) \subseteq \ZZ^n$.  The corresponding primary component
\mbox{$\cC_{\rho,J} = {\rm Hull}\big(I(B) + I_\rho + \mm_J^e\big)$} of
the lattice basis ideal~$I(B)$ is simply $I_\rho$ if $q = 0$, but will
in general be non-radical when $q \geq 2$ (recall that $q = 1$ is
impossible).

The quotient $\CC[\ttt]/\cC_{\rho,J}$ is toral if and only if $M$ is
square and satisfies either $\det(M) \neq 0$ or $q=0$.  To check this
statement, observe that $I(B)$ has $m = n - d$ generators, so the
dimension of any of its associated primes is at least~$d$.  But since
$A_J$ has rank at most~$d$, Lemma~\ref{l:toral-by-A} implies that
toral primes of~$I(B)$ have dimension exactly~$d$ (and are therefore
minimal).  If $I_{\rho,J}$ is a toral associated prime of $I(B)$
arising from a decomposition of the form~(\ref{eq:MNOB}), where $M$ is
$q \times p$, then the dimension of $I_{\rho,J}$ is $n-p-(m-q) =
d+q-p$, and from this we conclude that $M$ is square.  That $M$ is
invertible follows from the fact that $\rank(\ker_{\ZZ}(A_J)) = d$.
The same arguments show that if $M$ is not square invertible, then
$I_{\rho,J}$ is not toral.
\end{example}

\begin{example}
A binomial ideal $I \subseteq \CC[\ttt]$ may be $A$-graded for
different matrices $A$; in this case, which of the components of $I$
are toral will change if we alter the grading.  For instance, the
prime ideal $I = \<\del_1\del_4-\del_2\del_3\> \subseteq
\CC[\del_1,\dots,\del_4]$ is homogeneous for both the matrix
$[1 \, 1 \, 1 \, 1]$ and the matrix
$\left[\begin{smallmatrix}
  1 & 1 & 1 & 1 \\
  0 & 1 & 0 & 1 \\
  0 & 0 & 1 & 1
\end{smallmatrix}\right]$.
But $\CC[\del_1,\dots,\del_4]/I$ is toral in the
$\left[\begin{smallmatrix}
  1 & 1 & 1 & 1 \\
  0 & 1 & 0 & 1 \\
  0 & 0 & 1 & 1
\end{smallmatrix}\right]$-grading,
while it is not toral in the $[1 \, 1 \, 1 \, 1]$-grading.
\end{example}

\begin{example}\label{ex:binomial}
Let $I = \<bd-de,bc-ce,ab-ae,c^3-ad^2,a^2d^2-de^3,a^2cd-ce^3,a^3d-ae^3\>$
be a binomial ideal in $\CC[\ttt]$, where we write $\ttt =
(\del_1,\del_2,\del_3,\del_4,\del_5) = (a,b,c,d,e)$, and let
\[
  A = \left[\begin{array}{ccccc}
	1 & 1 & 1 & 1 & 1 \\
	0 & 1 & 2 & 3 & 1
      \end{array}\right]
\qquad\text{and}\qquad
  B = \left[\begin{array}{rrr}
	-2 & -1 &  0 \\ 
	 3 &  0 &  1 \\
	 0 &  3 &  0 \\ 
	-1 & -2 &  0 \\
	 0 &  0 & -1
      \end{array}\right].
\]
One easily verifies that the binomial ideal~$I$ is graded by $\ZZ A =
\ZZ^2$.  If $\omega$ is a primitive cube root of unity ($\omega^3 =
1$), then $I$, which is a radical ideal, has the prime decomposition
\begin{align*}
I =
  \<a,c,d\> &\cap\<bc-ad,b^2-ac,c^2-bd,b-e\>
\\          &\cap\<\omega bc-ad,b^2-\omega ac,\omega^2 c^2-bd,b-e\>
\\          &\cap\<\omega^2bc-ad,b^2-\omega^2ac,\omega c^2-bd,b-e\>.
\end{align*}
The intersectand $\<a,c,d\>$ equals the prime ideal $I_{\rho,J}$ for
$J = \{2,5\}$ and $L = \{0\} \subseteq \ZZ^J$.  The homomorphism
$\ZZ^J \to \ZZ^2$ is not injective since it maps both basis vectors to
$\left[\twoline{1}{1}\right]$; therefore the prime ideal $\<a,c,d\>$
is not a toral component of~$I$.  In contrast, the remaining three
intersectands are the prime ideals $I_{\rho,J}$ for the three
characters $\rho$ that are defined on~$\ker(A)$ but trivial on its
index~$3$ sublattice~$\ZZ B$ spanned by the columns~of $B$, where $J =
\{1,2,3,4,5\}$.  These prime ideals are all toral by
Corollary~\ref{c:toralprimes}, with $\ZZ A_J = \ZZ A$.
\end{example}

Toral components can be described more simply than in
Theorem~\ref{t:components}.

\begin{theorem}\label{t:toral}
Fix an $A$-graded binomial ideal $I \subseteq \CC[\ttt]$ and a toral
associated prime $I_{\rho,J}$ of~$I$.  Define the binomial ideal $\oI
= I \cdot \CC[\ttt]/\<\del_j - 1 \mid j \in J\>$ by setting $\del_j =
1$ for $j \in J$.
\begin{numbered}
\item
Fix a minimal prime $I_{\rho,J}$ of\/~$I$.  If\/ $\oU \subseteq
\NN^\oJ$ is the set of elements with infinite congruence class
in\/~$\NN^\oJ_\oI$ (Proposition~\ref{p:sim}), and $\ttt_J = \prod_{j
\in J} \del_j$, then $I$ has $I_{\rho,J}$-primary component
\[
  \cC_{\rho,J} = \big( (I + I_\rho) : \ttt_J^\infty\big) + \<\ttt^u
  \mid u \in \oU\>.
\]
\end{numbered}\setcounter{separated}{\value{enumi}}%
Let $K \subseteq \CC[\del_j \mid j \in \oJ]$ be a monomial ideal
containing a power of each available variable, and let $\oU_{\!K}
\subseteq \NN^\oJ$ be the set of elements with infinite congruence
class in\/~$\NN^\oJ_{\oI + K}$.
\begin{numbered}\setcounter{enumi}{\value{separated}}
\item
The $I_{\rho,J}$-primary component of $\<I + I_\rho + K\> \subseteq
\CC[\ttt]$ is $\big( (I + I_\rho) : \ttt_J^\infty \big) + \<\ttt^u
\mid u\in \oU_{\!K}\>$.
\item
If $K$ is contained in a sufficiently high power of\/ $\mm_J$, then
\[
  \cC_{\rho,J} = \big( (I + I_\rho) : \ttt_J^\infty\big) + \<\ttt^u
  \mid u \in \oU_{\!K}\>
\]
is a valid choice of $I_{\rho,J}$-primary component for~$I$.
\end{numbered}
The only monomials in the above primary components are in $\<\ttt^u
\mid u \in \oU\>$ or $\<\ttt^u \mid u \in \oU_{\!K}\>$.
\end{theorem}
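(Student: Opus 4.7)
The plan is to deduce Theorem~\ref{t:toral} from Theorem~\ref{t:components} by using the toral hypothesis to collapse the lattice-point description from $\NN^n = \NN^J \times \NN^\oJ$ down to $\NN^\oJ$. Since the colon summand $((I+I_\rho):\ttt_J^\infty)$ is identical in both statements, the task reduces to proving that the two monomial parts agree, equivalently to the combinatorial identity $\wU = \NN^J + \oU$ in $\NN^n$ (where we view $\oU \subseteq \NN^\oJ$ inside $\NN^n$).

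First I would rescale the variables $\del_j$ for $j \in J$ to assume $\rho$ is trivial, so that $I_\rho = I_L$ with $L$ saturated in $\ZZ^J$; by Corollary~\ref{c:toralprimes} the toral hypothesis forces $L = \ker_{\ZZ}(A_J)$. Then I would unwind the definitions: the projection $\pi : \ZZ^J \times \NN^\oJ \to \NN^\oJ$ (setting $\del_j = 1$ for $j \in J$) factors through $\ZZ\Phi \times \NN^\oJ$ because $I_L$ vanishes modulo $\<\del_j - 1 \mid j \in J\>$, hence $\oI = \pi(I) = \pi(I + I_\rho)$, and $\sim_{\oI}$ on $\NN^\oJ$ is the congruence induced by $\pi$ from $\sim$ on $\ZZ^J \times \NN^\oJ$.

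The crux is a pair of injectivity/lifting claims using the $A$-grading. Since $I + I_\rho$ is $A$-graded and $L \subseteq \ker A$, every $\sim$-class $\Gamma$ in $\ZZ\Phi \times \NN^\oJ$ has a single $A$-degree $\alpha$; combined with the toral hypothesis that $A_J : \ZZ\Phi \to \ZZ^d$ is injective, this forces $\pi$ to be injective on $\Gamma$, so $|\Gamma| = |\pi(\Gamma)|$ and $\Gamma$ has infinite image in $\ZZ\Phi \times \NN^\oJ$ if and only if $\pi(\Gamma)$ is infinite. One direction of $\wU = \NN^J + \oU$ then follows immediately: if $u \in \wU$, then $\pi(\Gamma_u) \subseteq [u_\oJ]_{\sim_{\oI}}$ is infinite, so $u_\oJ \in \oU$. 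For the reverse direction I would show $\pi(\Gamma_{(0, u_\oJ)}) = [u_\oJ]_{\sim_{\oI}}$: given any $w \in [u_\oJ]_{\sim_{\oI}}$, the unique element $v' \in \ZZ\Phi$ for which $(v', w)$ shares the $A$-degree of $(0, u_\oJ)$ must lie in $\Gamma_{(0, u_\oJ)}$, the lift being produced by decomposing a witness polynomial $F \in I + I_\rho$ for $u_\oJ \sim_{\oI} w$ into its $A$-graded pieces and tracking the $\ZZ\Phi$-coordinates, which under toral-ness are uniquely determined by the $A$-degree together with the $\NN^\oJ$-coordinate.

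Items~2 and~3, together with the ``only monomials'' statement, then follow by replacing $I$ with $I + K$ throughout and invoking the corresponding parts of Theorem~\ref{t:components}, so no additional work is required. The main obstacle is the lifting step just sketched; this is precisely where the toral hypothesis $L = \ker_{\ZZ}(A_J)$ is indispensable, since without injectivity of $A_J$ on $\ZZ\Phi$ the $A$-degree would fail to pin down the $\ZZ\Phi$-coordinate and distinct $\sim$-classes in $\ZZ\Phi \times \NN^\oJ$ could project onto the same subset of $\NN^\oJ$ in uncontrolled ways.
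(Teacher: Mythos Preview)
Your proposal is correct and follows essentially the same route as the paper. Both proofs reduce the theorem to the combinatorial identity $\wU = \NN^J \times \oU$, rescale to make $\rho$ trivial, pass through $\ZZ\Phi \times \NN^\oJ$, and then use the $A$-grading together with the toral hypothesis (injectivity of $\ZZ\Phi \to \ZZ^d$) to see that the projection $\pi$ is injective on each non-monomial congruence class $\Gamma$, since $\Gamma$ lies in a single coset of $\KK = \ker(\ZZ\Phi \times \ZZ^\oJ \to \ZZ^d)$ and $\KK \cap \ZZ\Phi = 0$. Your lifting argument for the reverse inclusion is a more explicit rendering of what the paper compresses into the assertion that ``further projection of these classes to $\NN^\oJ$ yields the congruence classes determined by~$\oI$''; the chain-of-generators lift you sketch is exactly how one justifies that sentence, and the uniqueness of the $\ZZ\Phi$-coordinate given the $A$-degree and the $\NN^\oJ$-coordinate is precisely the toral input.
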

\begin{proof}
Resume the notation from the statement and proof of
Theorem~\ref{t:components}.  As in that proof, it suffices here to
deal with the first item.  In fact, the only thing to show is that
$\wU$ in Theorem~\ref{t:components} is the same as $\NN^J \times \oU$
here.

Recall that $I' \subseteq \CC[Q]$ is the image of~$I$ modulo~$I_\rho$.
The congruence classes of $\ZZ\Phi \times \NN^\oJ$ determined by
$I'[\ZZ\Phi]$ are the projections under $\ZZ^J \times \NN^\oJ \to
\ZZ\Phi \times \NN^\oJ$ of the $\sim$ congruence classes.  Further
projection of these classes to~$\NN^\oJ$ yields the congruence classes
determined by the ideal $I'' \subseteq \CC[\NN^\oJ]$, where $I''$ is
obtained from $I'[\ZZ\Phi]$ by setting $\ttt^\phi = 1$ for all
\mbox{$\phi \in \ZZ\Phi$}.  This ideal $I''$ is just~$\oI$.  Hence
we are reduced to showing that a congruence class in $\Phi \times
\NN^\oJ$ determined by $I'[\ZZ\Phi]$ is infinite if and only if its
projection to~$\NN^\oJ$ is infinite.  This is clearly true for the
monomial congruence class in $\ZZ\Phi \times \NN^\oJ$.  For any other
congruence class $\Gamma \subseteq \ZZ\Phi \times \NN^\oJ$, the
homogeneity of~$I$ (and hence that of~$I'$) under the $A$-grading
implies that $\Gamma$ is contained within a coset of $\KK =
\ker(\ZZ\Phi \times \ZZ^\oJ \to \ZZ^d=\ZZ A)$.  This kernel~$\KK$
intersects $\ZZ\Phi$ only at $0$ because $I_{\rho,J}$ is toral.
Therefore the projection of any coset of~$\KK$ to~$\ZZ^\oJ$ is
bijective onto its image.  In particular, $\Gamma$ is infinite if and
only if its bijective image in $\NN^\oJ$~is~infinite.%
\end{proof}

\begin{cor}\label{c:I(B)}
Resume the notation of Example~\ref{ex:I(B)}.  If $I_{\rho,J}$ is a
toral minimal prime of the lattice basis ideal~$I(B)$ given by a
decomposition as in~(\ref{eq:MNOB}), so $J = J(M)$, then
\[
  \cC_{\rho,J} = I(B) + I_{\rho,J} + U_M,
\]
where $U_M \subseteq \CC[\del_j \mid j \in \oJ]$ is the ideal
$\CC$-linearly spanned by by all monomials whose exponent vectors lie
in the union of the unbounded $M$-subgraphs of\/~$\NN^\oJ$, as in
Definition~\ref{d:M}.  The only monomials in $\cC_{\rho,J}$ belong to
$U_M$.\qed
\end{cor}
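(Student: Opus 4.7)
The plan is to apply Theorem~\ref{t:toral}.1 directly to $I=I(B)$, translating the general data $(\oI,\oU)$ of that theorem into the specific combinatorics of the $M$-subgraphs.

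First, identify $\oI$. By the block decomposition~(\ref{eq:MNOB}), each defining binomial $\ttt^{u_+}-\ttt^{u_-}$ of $I(B)$ comes from a column $u$ of $B$. A right-block column has $u|_\oJ=0$, so setting $\del_j=1$ for $j\in J$ reduces its binomial to $1-1=0$; a left-block column has $u|_\oJ$ equal to the corresponding column $m$ of~$M$, so the same substitution yields $\ttt^{m_+}-\ttt^{m_-}$, a defining binomial of $I(M)$. Hence $\oI=I(M)$. Second, Example~\ref{ex:UM} identifies the $\sim_{I(M)}$ congruence classes on $\NN^\oJ$ as the $M$-subgraphs of Definition~\ref{d:M}, whence $\oU$ is the union of the unbounded $M$-subgraphs and the extension of $\<\ttt^u\mid u\in\oU\>$ to $\CC[\ttt]$ coincides with $U_M$. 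Theorem~\ref{t:toral}.1 then produces $\cC_{\rho,J}=((I(B)+I_\rho):\ttt_J^\infty)+U_M$, and the ``only monomials'' clause is inherited from the last sentence of that theorem.

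Third, I match this expression with $I(B)+I_{\rho,J}+U_M$. Modulo $\mm_J$ the left-block generators of $I(B)$ vanish and the right-block generators fall into $I_\rho$ (their columns lie in $\ker A$, hence in the saturated lattice for $\rho$), so $I(B)+I_\rho\equiv I_\rho\pmod{\mm_J}$; since $\ttt_J$ is a nonzero monomial and the toric prime $I_\rho$ has no monomial generators, $\ttt_J$ acts as a non-zero-divisor modulo~$I_\rho$, and the saturation $((I(B)+I_\rho):\ttt_J^\infty)$ stays inside $I_\rho+\mm_J=I_{\rho,J}$, yielding $\cC_{\rho,J}\subseteq I_{\rho,J}+U_M$. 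The reverse containment $\mm_J\subseteq((I(B)+I_\rho):\ttt_J^\infty)+U_M$ exploits that $M$ is square invertible in the toral case, combined with the Ho\c{s}ten--Shapiro irreducibility required for minimal primality: for each $k\in\oJ$, either $e_k$ lies in an unbounded $M$-subgraph (so $\del_k\in U_M$), or a lattice basis relation supplies $\ttt_J^N\del_k\in I(B)+I_\rho$ for some~$N$, placing $\del_k$ in the saturation.

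The main obstacle is this reverse containment $\mm_J\subseteq\cC_{\rho,J}$, which encodes the multiplicity-one character of toral minimal primes of $I(B)$. Its proof requires delicate combinatorial control over the boundary behavior of the $M$-subgraphs in $\NN^\oJ$, together with the algebraic constraints imposed by mixedness and irreducibility of $M$; everything else reduces to direct bookkeeping with the generators of $I(B)$ under the substitution $\del_j=1$ for $j\in J$.
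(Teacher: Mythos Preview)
Your first two steps---applying Theorem~\ref{t:toral}.1 to $I(B)$ and identifying $\oI$ with $I(M)$ via the block structure~(\ref{eq:MNOB}), so that $\oU$ is the union of the unbounded $M$-subgraphs (Example~\ref{ex:UM})---are exactly what the paper's bare \qed\ encodes, and they are correct.

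The difficulty you isolate in the third step is genuine, but it is not a gap you can fill: it reflects a misprint in the displayed formula, which should read $I_\rho$ rather than $I_{\rho,J}$. Indeed, the formula as printed forces $\mm_J\subseteq\cC_{\rho,J}$; combined with the corollary's own last sentence (the only monomials in $\cC_{\rho,J}$ lie in~$U_M$), this would require every basis vector $e_k\in\NN^\oJ$ to sit in an \emph{unbounded} $M$-subgraph. That fails already in Example~\ref{ex:concrete-subgraph}, taking $B=M$ so that $J=\emptyset$ and $I_{\rho,J}=\mm=\<\del_1,\del_2,\del_3\>$: there $e_1$ lies in the bounded $M$-subgraph $\{a+b+c=1\}$, and the $\mm$-primary component $\<\del_1-\del_2,\del_3-\del_2,\del_2^4\>$ of~$I(M)$ visibly does not contain the monomial~$\del_1$. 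Your proposed dichotomy---either $e_k$ lies in an unbounded $M$-subgraph, or some $\ttt_J^N\del_k\in I(B)+I_\rho$---collapses for the same reason: the second alternative would place the monomial $\del_k$ in $\cC_{\rho,J}$, hence (by the monomial clause you already established) in $U_M$, forcing $e_k$ into an unbounded subgraph after all; so the second branch is vacuous whenever the first one fails. With $I_\rho$ in place of $I_{\rho,J}$, your steps~1--2 already constitute the proof, up to the routine check that saturating by $\ttt_J$ contributes nothing beyond what $U_M$ already supplies.
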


\begin{remark}
Theorem~\ref{t:toral} need not always be false for a component that is
not toral, but it can certainly fail: there can be congruence classes
in $\ZZ\Phi \times \NN^\oJ$ that are infinite only in the $\ZZ\Phi$
direction, so that their projections to~$\NN^\oJ$ are finite.
\end{remark}

%%%%%%%%%%%%%%%%%%%%%%%%%%%%%%%%%%%%%%%%%%%%%%%%%%%%%%%%%%%%%%%%%%%%%%%%%
\section{Applications, examples, and further directions}%%%%%%%%%%%%%%%%%
%%%%%%%%%%%%%%%%%%%%%%%%%%%%%%%%%%%%%%%%%%%%%%%%%%%%%%%%%%%%%%%%%%%%%%%%%
\label{s:applications}

In this section we give a brief overview of the connection between 
binomial primary decomposition and hypergeometric differential 
equations, study some examples, and discuss computational issues.

From the point of view of complexity, primary decomposition is hard:
even in the case of zero dimensional binomial complete intersections,
counting the number of associated primes (with or without
multiplicity) is a $\# P$-complete problem
\cite{cattani-dick:binomial-ci}.  However, the primary decomposition
algorithms implemented in Singular \cite{Singular} or Macaulay2
\cite{M2} work very well in reasonably sized examples, and in fact,
they provide the only implemented method for computing bounded
congruence classes or $M$-subgraphs as in Section~\ref{s:primary}.
We remind the reader that \cite[Section 8]{binomialideals} contains
specialized algorithms for binomial primary decomposition, whose 
main feature is that they preserve binomiality at each step. 

In the case that $q=2$, we can study $M$-subgraphs directly by
combinatorial means \cite[Section~6]{dms}.  The relevant result is the
following.

\begin{prop}
Let $M$ be a mixed invertible $2 \times 2$ integer matrix.
Without loss of generality, write
$M =  \left[ \begin{array}{rr} a & b \\ -c & -d \end{array} \right] $,
where $a,b,c,d$ are positive integers.  Then the number of
bounded $M$-subgraphs is $\min(ad, bc)$.  Moreover, if
\[
  R = \left\{
  \begin{array}{lr} 
  \{(s,t) \in \NN^2 \mid s < b \text{ and } t < c  \} & \mbox{if}\; ad > bc, \\
  \{(s,t) \in \NN^2 \mid s < a \text{ and } t < d  \} & \mbox{if}\; ad < bc,
  \end{array}
  \right.
\]
then every bounded $M$-subgraph passes through exactly one of the
points in $R$.
\end{prop}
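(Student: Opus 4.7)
Without loss of generality take $ad > bc$; the case $ad < bc$ follows by swapping the two columns of $M$, which permutes $(a,c)\leftrightarrow(b,d)$ in the statement. Then $|R|=bc=\min(ad,bc)$, so the goal is to set up a bijection between bounded $M$-subgraphs and the points of $R$.

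I will introduce a deterministic reduction on $\NN^2$: if $s\geq b$, apply \emph{rule A}, $(s,t)\mapsto(s-b,t+d)$; else if $t\geq c$, apply \emph{rule B}, $(s,t)\mapsto(s+a,t-c)$; else stop, at a point of $R$. Each step is an $M$-edge, so the reduction moves inside the $M$-subgraph. No reduction can cycle, because a cycle consisting of $\alpha$ rule-A and $\beta$ rule-B steps would satisfy $\alpha(-b,d)+\beta(a,-c)=(0,0)$, forcing $ad=bc$ and contradicting invertibility of $M$. Thus in any finite subgraph the reduction terminates at a point of $R$, so every bounded subgraph contains at least one point of $R$.

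Next, confluence follows from the diamond lemma: if both rules are available at $(s,t)$, then rule A followed by rule B and rule B followed by rule A both produce $(s+a-b,t+d-c)$, giving a unique terminal $R$-point wherever the reduction terminates. The sink is then shown to be invariant along every $M$-edge. For the edges corresponding to rules A and B there is nothing to prove. For an edge $(s_1,t_1)\to(s_1+b,t_1-d)$, the reverse edge is precisely rule A at the target, so the target's reduction begins by returning to the source. For an edge $(s_1,t_1)\to(s_1-a,t_1+c)$ (requiring $s_1\geq a$), if $s_1<a+b$ then rule B at the target returns to the source, while if $s_1\geq a+b$ the two rule-A images are again related by the same kind of edge but with strictly smaller first coordinate, allowing an induction on $s_1$ to close the argument. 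Consequently the sink is a well-defined function on each $M$-subgraph, so distinct bounded subgraphs yield distinct $R$-points.

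It remains to show that every $P=(s_0,t_0)\in R$ lies in a bounded subgraph. The subgraph of $P$ coincides with the tree of inverse reductions starting at $P$, whose nodes have coordinates $(s_0+k_A b - k_B a,\; t_0 - k_A d + k_B c)$ for $(k_A,k_B)\in\NN^2$. The requirement that these lie in $\NN^2$ gives $(k_A d - t_0)/c \leq k_B \leq (k_A b + s_0)/a$, whose consistency forces $k_A(ad-bc)\leq a t_0 + c s_0$, bounding $k_A$ and then $k_B$; the tree is therefore finite and the subgraph bounded. The most delicate step is the induction establishing sink-invariance along the edge $(s_1,t_1)\to(s_1-a,t_1+c)$, which uses the confluence and noncyclicity of the reduction in an essential way; once this invariance is in hand the bijection $\{\text{bounded subgraphs}\}\leftrightarrow R$ and the counting $\min(ad,bc)=|R|$ are immediate.
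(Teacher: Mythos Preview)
The paper does not prove this proposition; it merely states the result and cites \cite[Section~6]{dms}.  So there is no in-paper argument to compare against, and I will simply assess your proof.

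Your strategy---reduce $ad>bc$ by symmetry, set up the rewriting rules A and B, use acyclicity plus finiteness for termination, invoke the diamond lemma for confluence, and thereby assign to every bounded subgraph a unique sink in~$R$---is correct and clean.  (Incidentally, once you have confluence of the nondeterministic system on a bounded subgraph, sink-invariance along \emph{all four} $M$-edges is immediate: the reverse of each rule is available at its target, so your induction for the edge $(s_1,t_1)\to(s_1-a,t_1+c)$ is unnecessary.)

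There is, however, a genuine gap in your final step.  You assert that ``the subgraph of $P$ coincides with the tree of inverse reductions starting at $P$,'' and then bound that tree.  But this coincidence is exactly what needs proof, and you cannot appeal to the sink map: confluence and termination were only established on subgraphs already known to be bounded, so invoking them here is circular.  Concretely, every point $M$-connected to $P$ lies in the coset $P+\ZZ M$ and hence has the form $P+m_1(a,-c)+m_2(b,-d)$ for unique $m_1,m_2\in\ZZ$; membership in $\NN^2$ (using $s_0<b$, $t_0<c$) forces either $(m_1\le 0,\ m_2\ge 0)$ or $(m_1\ge 1,\ m_2\le -1)$, and the second region is in general \emph{infinite}.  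Your bound only covers the first region (your $k_A=m_2\ge 0$, $k_B=-m_1\ge 0$), so you must argue that the subgraph of $P$ cannot reach the second.

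The fix is short: show that the set $G=\{P+m_1(a,-c)+m_2(b,-d):m_1\le 0,\ m_2\ge 0\}\cap\NN^2$ is closed under $M$-adjacency.  The only problematic cases are $m_1=0\mapsto m_1=1$ and $m_2=0\mapsto m_2=-1$; in the first, the second coordinate becomes $t_0-m_2 d-c<c-c=0$, and in the second, the first coordinate becomes $s_0+m_1 a-b<b-b=0$, so neither edge lands in $\NN^2$.  Since $P\in G$ and $G$ is $M$-closed, the $M$-subgraph of $P$ lies in $G$, and your inequality $k_A(ad-bc)\le at_0+cs_0$ then finishes the boundedness.  With this paragraph added, your argument is complete.
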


If $q>2$, a method for computing $M$-subgraphs may be obtained through
a link to differential equations.  To make this evident, we make a
change in the notation for the ambient ring.

\begin{notation}
All binomial ideals in the remainder of this article are ideals in the
polynomial ring $\CC[\ddel]=\CC[\ddel_1,\dots,\ddel_n]$.
\end{notation}

The following result for $M$-subgraphs can be adapted to fit the more
general context of congruences.

\begin{prop}\label{prop:sols-via-subgraphs}
Let $M$ be a $q\times q$ mixed invertible integer matrix, and assume
that $q>0$.  Given $\gamma \in \NN^q$, denote by $\Gamma$ the
$M$-subgraph containing $\gamma$.  Think of the ideal $I(M) \subseteq
\CC[\ddel]$ as a system of linear partial differential equations with
constant coefficients.
\begin{numbered}
\item
The system of differential equations $I(M)$ has a unique formal power
series solution of the form $G_{\gamma} = \sum_{u \in \Gamma}
\lambda_u x^u$ in which $\lambda_{\gamma} = 1$.
\item
The other coefficients $\lambda_u$ of $G_{\gamma}$ for $u \in \Gamma$
are all nonzero.
\item
The set $\{G_{\gamma} \mid \gamma$ runs over a set of representatives
for the $M$-subgraphs of\/~$\NN^q\}$ is a basis for the space of all
formal power series solutions of~$I(M)$.
\item
The set $\{G_{\gamma} \mid \gamma$ runs over a set of representatives
for the \emph{bounded} $M$-subgraphs of~$\NN^q\}$ is a basis for the
space of polynomial solutions of~$I(M)$.
\end{numbered}
\end{prop}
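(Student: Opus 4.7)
The plan is to translate $I(M)$, viewed as a system of constant-coefficient PDEs in variables $x_1,\dots,x_q$, into a recurrence on the coefficients of a formal power series, and then show that this recurrence decouples along the $M$-subgraphs.

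For each column $w = w_+ - w_-$ of $M$, using $\ddel^\alpha x^u = \tfrac{u!}{(u-\alpha)!}\, x^{u-\alpha}$ when $u \geq \alpha$ (and $0$ otherwise), direct computation on $F = \sum_u \lambda_u\, x^u$ yields
\[
  (\ddel^{w_+} - \ddel^{w_-})\, F \;=\; \sum_{v \in \NN^q} \frac{1}{v!}\bigl[(v+w_+)!\,\lambda_{v+w_+} - (v+w_-)!\,\lambda_{v+w_-}\bigr]\, x^v.
\]
Thus $F$ solves $I(M)$ if and only if $u!\,\lambda_u = u'!\,\lambda_{u'}$ whenever $\{u,u'\}$ is an edge of $\Gamma(M)$, i.e., the function $u \mapsto u!\,\lambda_u$ is locally constant on $\Gamma(M)$, hence constant on each $M$-subgraph.

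From this observation the four assertions follow in short order. For (1) and (2), the normalization $\lambda_\gamma = 1$ forces $\lambda_u = \gamma!/u!$ for $u \in \Gamma$ and leaves no freedom elsewhere on $\Gamma$; these values are manifestly nonzero, and setting $\lambda_u = 0$ for $u \notin \Gamma$ genuinely produces a solution, because each edge of $\Gamma(M)$ has both endpoints in the same $M$-subgraph, so either both factorial-weighted coefficients lie in $\Gamma$ and agree, or both vanish. For (3), any formal power series solution $F$ decomposes coefficient-by-coefficient as $F = \sum_\Gamma \lambda_{\gamma_\Gamma}\, G_{\gamma_\Gamma}$, a sum that is well defined as a formal power series because each monomial $x^u$ lies in exactly one $M$-subgraph; the same disjointness of supports provides linear independence of the $G_\gamma$. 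For (4), $G_\gamma$ is a polynomial precisely when its support $\Gamma$ is finite, i.e., when $\Gamma$ is bounded, so restricting the basis of (3) to representatives of the bounded $M$-subgraphs yields a basis for the polynomial solutions.

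The main subtlety is confirming that the disjoint-support generators $\ddel^{w_+} - \ddel^{w_-}$ of $I(M)$ exhaust the PDE constraints: this is built into Example~\ref{ex:Q=NN}, which shows that these generators already generate the full ideal, so the alternative generators $\ddel^u - \ddel^v$ (with $u - v$ a column of $M$) impose no additional recurrences beyond the above. A secondary point is that the sum $\sum_\Gamma \lambda_{\gamma_\Gamma} G_{\gamma_\Gamma}$ in (3) converges in the formal-series sense, which is immediate from the disjointness of supports of distinct $G_\gamma$.
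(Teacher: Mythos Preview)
Your proof is correct. The paper itself does not give a proof of this proposition; it simply states that ``the straightforward proof of this proposition can be found in \cite[Section~7]{dmm}.'' Your argument---translating the constant-coefficient operators $\ddel^{w_+}-\ddel^{w_-}$ into the recurrence $(v+w_+)!\,\lambda_{v+w_+}=(v+w_-)!\,\lambda_{v+w_-}$ and observing that this says precisely that $u\mapsto u!\,\lambda_u$ is constant on $M$-subgraphs---is exactly the kind of straightforward verification the authors have in mind, and it cleanly yields all four parts. Your check that the pairs $(v+w_+,v+w_-)$ for $v\in\NN^q$ recover \emph{all} edges of $\Gamma(M)$ (not just some of them) is the one place a reader might pause, and you handle it correctly via the disjoint-support observation from Example~\ref{ex:Q=NN}. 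Note, incidentally, that your argument nowhere uses the hypotheses that $M$ is mixed or invertible; these are present for the broader context (Example~\ref{ex:I(B)} and the hypergeometric applications) rather than for this proposition.
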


The straightforward proof of this proposition can be found in
\cite[Section~7]{dmm}.

The following example illustrates the correspondence between
$M$-subgraphs and solutions of $I(M)$.

\begin{example}
\label{ex:concrete-subgraph}
Consider the $3\times 3$ matrix
\[
M =
  \left[
  \begin{array}{rrr}
    1 & -5 &  0 \\
   -1 &  1 & -1 \\
    0 &  3 &  1
  \end{array}
  \right].
\]
A basis of solutions (with minimal support under inclusion) of $I(M)$
is easily computed:
\[
\left\{
  1, \quad x+y+z, \quad  (x+y+z)^2, \quad (x+y+z)^3, \quad
  \sum_{n \geq 4} \frac{(x+y+z)^n}{n!}
\right\}.
\]
\begin{figure} 
\[
\psfrag{a}{\footnotesize$a$}
\psfrag{b}{\footnotesize$b$}
\psfrag{c}{\footnotesize$c$}
%includegraphics{fig-axes-30.pdf}
\includegraphics{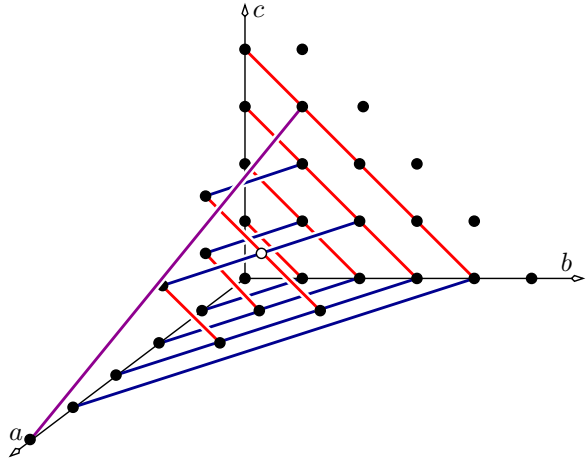}
\]
\caption{The $M$-subgraphs of $\NN^3$ for Example~\ref{ex:concrete-subgraph}.}
\label{f:M-subgraphs}
\end{figure}

The $M$-subgraphs of $\NN^3$ are the four slices $\{(a,b,c) \in \NN^3
\mid a + b + c = n\}$ for $n \leq 3$; for $n \geq 4$, two consecutive
slices are $M$-connected by  $(-5,1,3)$,  yielding one unbounded
$M$-subgraph (see Figure~\ref{f:M-subgraphs}).
\end{example}

A direct combinatorial algorithm for producing the bounded
$M$-subgraphs for $q > 2$, or even for finding their number would be
interesting and useful, as the number of bounded $M$-subgraphs gives
the dimension of the polynomial solution space of a hypergeometric
system, and also the multiplicity of an associated prime of a lattice
basis ideal.  In the case where $I(M)$ is a zero-dimensional complete
intersection, such an algorithm can be produced from the results in
\cite{cattani-dick:binomial-ci}.  The combinatorial computation of the
number of bounded congruence classes determined by a binomial ideal in
a semigroup ring is open.

The system of differential equations $I(M) \subseteq\CC[\ddel]$ is a
special case in the class of \emph{Horn hypergeometric systems}.  That
class of systems takes center stage in our companion article
\cite{dmm}, in the more general setting of \emph{binomial $D$-modules} that
are introduced there.  
The input data for these consist of a binomial ideal~$I$ and a
vector~$\beta$ of complex parameters.  The special case where $I$ is
prime corresponds to the \emph{$A$-hypergeometric} or \emph{GKZ
hypergeometric systems}, after Gelfand, Kapranov, and Zelevinsky
\cite{GGZ, GKZ}.  Binomial primary decomposition is crucial for the
study of Horn systems, and their more general binomial relatives,
because the numerics, algebra, and combinatorics of their solutions
are directly governed by the corresponding features of the input
binomial ideal.  The dichotomy between components that are toral or
not, for example, distinguishes between the choices of parameters
yielding finite- or infinite-dimensional solution spaces; and in the
finite case, the multiplicities of the toral components enter into
simple formulas for the dimension.  The use of binomial primary
decomposition to extract invariants and reduce to the
$A$-hypergeometric case underlies the entirety of~\cite{dmm}; see
Section~1.6 there for precise statements.

Using this ``hypergeometric perspective'', it
becomes possible to consider algorithms for binomial primary
decomposition that exploit methods for solving differential equations.
The idea would be to use the fact from \cite[Section 7]{dmm}, that the
supports of power series solutions of hypergeometric systems contain
the combinatorial information needed to describe certain toral primary
components of the underlying binomial ideal.  The method of canonical
series \cite[Sections~2.5 and~2.6]{SST}, a symbolic algorithm for
constructing power series solutions of regular holonomic left
$D_n$-ideals, might be useful.  Canonical series methods are based on
Gr\"obner bases in the Weyl algebra, and generalize the classical
Frobenius method in one variable.

%%%%%%%%%%%%%%%%%%%%%%%%%%%%%%%%%%%%%%%%%%%%%%%%%%%%%%%%%%%%%%%%%%%%%%%%%
%%%%%%%%%%%%%%%%%%%%%%%%%%%%%%%%%%%%%%%%%%%%%%%%%%%%%%%%%%%%%%%%%%%%%%%%%

\raggedbottom
% \addcontentsline{toc}{section}{\numberline{}}
% \bibliographystyle{amsalpha}\bibliography{bibliography}
\def\cprime{$'$} \def\cprime{$'$}
\providecommand{\MR}{\relax\ifhmode\unskip\space\fi MR }
% \MRhref is called by the amsart/book/proc definition of \MR.
\providecommand{\MRhref}[2]{%
  \href{http://www.ams.org/mathscinet-getitem?mr=#1}{#2}
}
\providecommand{\href}[2]{#2}
%%%%%%%%%%%%%%%%%%%%%%%%%%%%%%%%%%%%%%%%%%%%%%%%%%%%%%%%%%%%%%%%%%%%%%%%%
%%%%%%%%%%%%%%%%%%%%%%%%%%%%%%%%%%%%%%%%%%%%%%%%%%%%
%%%%%%%%%%%%%%%%%%%%%%%%%%%%%%%%%%%%%%%%%%%%%%%%%%%%%%%%%%%%%%%%%%%%%%%%%

%%%%%%%%%%%%%%%%%%%%%%%%%%%%%%%%%%%%%%%%%%%%%%%%%%%%%%%%%%%%%%%%%%%%%%%%%
\end{document}